\numberwithin{equation}{section}
\tikzset{commutative diagrams/arrow style=Latin Modern}
\newenvironment{aenumerate}{%
	\begin{enumerate}[label=(\alph{*}), ref=(\alph{*})]
}{%
	\end{enumerate}%
}
\newtheorem{theorem}{Theorem}[section]
\newtheorem{lemma}[theorem]{Lemma}
\newtheorem{corollary}[theorem]{Corollary}
\newtheorem{proposition}[theorem]{Proposition}
\theoremstyle{definition}
\newtheorem{definition}[theorem]{Definition}
\theoremstyle{remark}
\newtheorem{example}[equation]{Example}
\newtheorem*{note}{Note}
\newcommand{\newpar}{\subsection{}}
\crefname{subsection}{\textsection}{\textsection}
\newcommand{\shH}{\mathcal{H}}
\newcommand{\norm}[1]{\lVert#1\rVert}
\newcommand{\abs}[1]{\lvert #1 \rvert}
\newcommand{\tensor}{\otimes}
\newcommand{\NN}{\mathbb{N}}
\newcommand{\ZZ}{\mathbb{Z}}
\newcommand{\QQ}{\mathbb{Q}}
\newcommand{\RR}{\mathbb{R}}
\newcommand{\CC}{\mathbb{C}}
\newcommand{\HH}{\mathbb{H}}
\newcommand{\PP}{\mathbb{P}}
\newcommand{\menge}[2]{\bigl\{ \thinspace #1 \thinspace\thinspace \big\vert%
\thinspace\thinspace #2 \thinspace \bigr\}}
\DeclareMathOperator{\rk}{rk}
\DeclareMathOperator{\Res}{Res}
\DeclareMathOperator{\id}{id}
\renewcommand{\Im}{\operatorname{Im}}
\renewcommand{\Re}{\operatorname{Re}}
\DeclareMathOperator{\End}{End}
\DeclareMathOperator{\GL}{GL}
\DeclareMathOperator{\SL}{SL}
\newcommand{\define}[1]{\emph{#1}}
\newcommand{\sltwo}{\mathfrak{sl}_2(\CC)}
\newcommand{\shf}[1]{\mathscr{#1}}
\newcommand{\OX}{\shf{O}_X}
\newcommand{\OmX}{\Omega_X}
\newcommand{\shHZ}{\shH_{\ZZ}}
\newcommand{\restr}[1]{\big\vert_{#1}}
\newcommand{\argbl}{-}
\def\overbar#1#2#3{{%
	\setbox0=\hbox{\displaystyle{#1}}%
	\dimen0=\wd0
	\advance\dimen0 by -#2 
	\vbox {\nointerlineskip \moveright #3 \vbox{\hrule height 0.3pt width \dimen0}%
		\nointerlineskip \vskip 1.5pt \box0}%
}}
\newcommand{\dst}{\Delta^{\ast}}
\newcommand{\into}{\hookrightarrow}
\newcommand{\inner}[2]{\langle #1, #2 \rangle}
\newcommand{\HR}{H_{\RR}}
\newcommand{\HZ}{H_{\ZZ}}
\newcommand{\HC}{H_{\CC}}
\newcommand{\fu}{f^{\ast}}
\newcommand{\piu}{\pi^{\ast}}
\newcommand{\shE}{\shf{E}}
\newcommand{\shO}{\shf{O}}
\newcommand{\shHZx}{\shH_{\ZZ,x}}
\newcommand{\RRanexp}{\RR_{\mathrm{an}, \mathrm{exp}}}
\newcommand{\RRan}{\RR_{\mathrm{an}}}
\newcommand{\RRalg}{\RR_{\mathrm{alg}}}
\newcommand{\HQ}{H_{\QQ}}
\DeclareMathOperator{\Aut}{Aut}
\newcommand{\Sieg}{\mathfrak{S}}
\newcommand{\Gtl}{\tilde{G}}
\newcommand{\Ktl}{\tilde{K}}
\newcommand{\Stl}{\tilde{S}}
\newcommand{\Atl}{\tilde{A}}
\newcommand{\Ptl}{\tilde{P}}
\newcommand{\Utl}{\tilde{U}}
\newcommand{\Mtl}{\tilde{M}}
\newcommand{\ktl}{\tilde{k}}
\newcommand{\ptl}{\tilde{p}}
\DeclareMathOperator{\diag}{diag}
\newcommand{\shEt}{\tilde{\shE}}
\newcommand{\Et}{\tilde{E}}
\newcommand{\dt}{\mathit{dt}}
\newcommand{\Xt}{\tilde{X}}
\newcommand{\Phit}{\tilde{\Phi}}
\newcommand{\Fsh}{F_{\sharp}}
\newcommand{\Csh}{C_{\sharp}}
\newcommand{\half}{\frac{1}{2}}
\newcommand{\OmY}{\Omega_Y}
\newcommand{\OY}{\shO_Y}
\title{Finiteness for self-dual classes in integral\\variations of Hodge structure}
\author{Benjamin Bakker}
\address{Department of Mathematics, Statistics,
and Computer Science, University of Illinois at Chicago, 851 S. Morgan St., Chicago, IL 60607}
\email{bakker.uic@gmail.com}
\author{Thomas W.~Grimm}
\address{Institute for Theoretical Physics, Utrecht University, Princetonplein 5, 3584 CE Utrecht, The Netherlands}
\email{t.w.grimm@uu.nl}
\author{Christian Schnell}
\address{Department of Mathematics, Stony Brook University, Stony Brook, NY 11794}
\email{christian.schnell@stonybrook.edu}
\author{Jacob Tsimerman}
\address{Department of Mathematics, University of Toronto, 215 Huron Street, Toronto, Canada}
\email{jacobt@math.toronto.edu}
\begin{document}

%%%%%%%%%%%%%%%%%%%%%%%%%%%%%%%
% Title page
%%%%%%%%%%%%%%%%%%%%%%%%%%%%%%%

%\removeabove{}
%\removebetween{}
%\removebelow{}

\maketitle

\begin{prelims}

\DisplayAbstractInEnglish

\bigskip

\DisplayKeyWords

\medskip

\DisplayMSCclass

%\bigskip

%\languagesection{Fran\c{c}ais}

%\bigskip

%\DisplayTitleInFrench

%\medskip

%\DisplayAbstractInFrench

\end{prelims}

%%%%%%%%%%%%%%%%%%%%%
% Table of Contents
%%%%%%%%%%%%%%%%%%%%%

\newpage

\setcounter{tocdepth}{1}

\tableofcontents

%%%%%%%%%%%%%%%%%%%%%
% Content begins here
%%%%%%%%%%%%%%%%%%%%%

\section{Introduction}

\newpar 
The purpose of this paper is to prove a rather unexpected new finiteness
result for polarized integral variations of Hodge structure, containing the theorem
of Cattani, Deligne, and Kaplan for the locus of Hodge classes \cite{Cattani+Deligne+Kaplan} as a
special case. Instead of integral Hodge classes, we consider integral classes that
are ``self-dual'', meaning that they are preserved by the action of the Weil
operator; the motivation for this comes from considerations in theoretical physics.
Analyzing such classes using the methods in \cite{Cattani+Deligne+Kaplan} becomes rather complicated, so
our main tool is going to be the definability of period mappings in the o-minimal
structure $\RRanexp$, recently proved by Bakker, Klingler, and Tsimerman
\cite{Bakker+Klingler+Tsimerman:TameTopology}. This transforms the problem into a pleasant set of exercises about
certain algebraic groups.

\newpar
We begin by describing a toy case of the problem, to set up the notation. Suppose
that $H$ is a polarized integral Hodge structure of even weight $2k$.
We denote by
\[
	\HC = \HZ \tensor_{\ZZ} \CC = \bigoplus_{p+q=2k} H^{p,q}
\]
the Hodge decomposition, and by $Q \colon \HZ \tensor_{\ZZ} \HZ \to \ZZ$ the
symmetric bilinear form giving the polarization. If we define the \define{Weil
operator} by the formula
\[
	Cv = i^{p-q} v \quad \text{for $v \in H^{p,q}$},
\]
then $C \in \End(\HR)$ and $C^2 = \id$, and the expression
\[
	\inner{\argbl}{\argbl} \colon \HR \tensor_{\RR} \HR \to \RR, \quad
	\inner{v}{w} = Q(v, Cw),
\]
puts a positive definite inner product on $\HR = \HZ \tensor_{\ZZ} \RR$. We usually
write the resulting \define{Hodge norm} simply as $\norm{v}^2 = \inner{v}{v}$. 

We shall be interested in integral vectors $v \in \HZ$ with the property that $Cv = v$. 
Since $C^2 = \id$, any vector $v \in \HR$ can of course be decomposed uniquely as
\[
	v = v_+ + v_- \qquad \text{with $C v_+ = v_+$ and $C v_- = - v_-$;}
\]
concretely, $v_+$ is the sum of all the ``even'' components in the Hodge decomposition
of $v$, and $v_-$ the sum of all the ``odd'' ones. By analogy with the action of the
Hodge $\ast$-operator on the cohomology of four-manifolds, we may call $v_+$ and
$v_-$ the \emph{self-dual} respectively \emph{anti-self-dual} part of $v$. We say
that an integral class is self-dual if $v = v_+$; note that Hodge classes (in $\HZ
\cap H^{k,k}$) are obviously self-dual. With this notation, we have
\[
	\norm{v}^2 = \norm{v_+}^2 + \norm{v_-}^2 \geq 
	\norm{v_+}^2 - \norm{v_-}^2 = Q(v,v),
\]
with equality exactly when $Cv = v$. Among integral vectors with a fixed value of
$Q(v,v)$, those with $Cv = v$ therefore have the smallest possible Hodge norm
$\norm{v}^2$. In this setting, we have the following completely trivial finiteness
result: the set
\[
	H_q^+ = \menge{v \in \HZ}{\text{$Cv = v$ and $Q(v,v) = q$}}
\]
of self-dual integral vectors with a given self-intersection number $q \geq 1$ is
finite. Our main theorem is a generalization of this fact to arbitrary polarized
integral variations of Hodge structure of even weight.

\newpar
We now turn to the main result. Let $X$ be a nonsingular complex algebraic variety,
not necessarily complete, and let $\shH$ be a polarized integral variation of Hodge
structure on $X$, of even weight $2k$. Let $p \colon E \to X$ be the underlying
complex vector bundle, whose sheaf of holomorphic sections is isomorphic to $\OX
\tensor_{\ZZ} \shHZ$; here $\shHZ$ denotes the underlying local system of free
$\ZZ$-modules. At each point $x \in X$, the complex vector space $E_x = p^{-1}(x)$ is
equipped with a polarized integral Hodge structure of weight $2k$; the set of
integral vectors coincides with the stalk $\shHZx$. We denote by $C_x \in \End(E_x)$
the Weil operator, and by $Q_x$ the polarization; it is the stalk of the pairing $Q
\colon \shHZ \tensor_{\ZZ} \shHZ \to \ZZ_X$ that defines the polarization on $\shH$.
We shall think of the points of $E$ as pairs $(x,v)$, where $x \in X$ and $v \in
E_x$. 

Recall that $E$ is actually an algebraic vector bundle \cite{Deligne}; the algebraic structure is
uniquely determined by $\shH$. We shall give both $E$ and $X$ the
$\RRanexp$-definable structure extending their algebraic structure; then the
projection $p \colon E \to X$ becomes a morphism of definable spaces. Our main result
is that the set of all self-dual integral classes with fixed self-intersection number
is a \emph{definable} subspace of $E$.

\begin{theorem} \label{thm:main}
	Let $\shH$ be a polarized integral variation of Hodge structure of even weight
	on a nonsingular complex algebraic variety $X$. For each $q \geq 1$, the set
	\[
		\menge{(x,v) \in E}{\text{$v \in E_x$ is integral, $C_x v = v$, 
				and $Q_x(v,v) = q$}}
	\]
	is a definable, closed, real-analytic subspace of $E$, and the restriction of $p
	\colon E \to X$ to this set is proper with finite fibers.
\end{theorem}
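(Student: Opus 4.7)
The plan is to use the $\RRanexp$-definability of period maps, proved by Bakker--Klingler--Tsimerman, to put definable structures on every object in sight; the content then reduces to a uniform Hodge-norm bound coming from self-duality, discreteness of lattice points in a Euclidean space, and a classical finiteness statement for arithmetic orbits on integral vectors of fixed length.

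First I would check that the Weil operator $C$ is a real-analytic $\RRanexp$-definable section of $\End(E_{\RR})$, where $E_{\RR}$ denotes the underlying real vector bundle of $E$. At each point, $C$ is an algebraic function of the Hodge filtration via $C|_{H^{p,q}} = i^{p-q}$ and $H^{p,q} = F^p \cap \overline{F^q}$, and the Hodge filtration is $\RRanexp$-definable on $X$ by BKT. Since $Q$ is flat, the locus
\[
T = \menge{(x,v) \in E}{C_x v = v \text{ and } Q_x(v,v) = q}
\]
is a closed, real-analytic, definable subspace of $E$.

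The key elementary observation is that on $T$ one has $\norm{v}_x^2 = Q_x(v, C_x v) = q$, so each fiber $T \cap E_x$ is a sphere in the $(+1)$-eigenspace of $C_x$ acting on $\shH_{\RR,x}$, and is in particular compact. Since the Hodge metric varies continuously and is uniformly comparable to any continuous reference metric on compact subsets of $X$, the projection $p|_T \colon T \to X$ is proper. The set $S_q$ is the intersection of $T$ with the closed real-analytic subspace of integral classes in $E$, so it is closed and real-analytic; each fiber $S_q \cap E_x$ is the intersection of the compact sphere $T \cap E_x$ with the discrete lattice $\shHZx$, hence finite, and the properness of $p|_{S_q}$ follows by restriction from that of $p|_T$.

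The main obstacle is the definability of $S_q$ itself, since the full set of integral classes in $E$ is not $\RRanexp$-definable. To handle this I would pass to the period domain $D$ and enlarge the monodromy group $\Gamma$ to the arithmetic orthogonal group $\Gamma' = O(\HZ, Q)$, which by Siegel's theorem acts with finitely many orbits on $\menge{v \in \HZ}{Q(v,v) = q}$. For each of finitely many orbit representatives $v_1, \ldots, v_N$, the constraint set $\menge{t \in D}{C_t v_j = v_j}$ is a definable subset of $D$; after quotienting by the stabilizer of $v_j$ in $\Gamma'$ using Siegel sets in the BKT framework, it descends to a definable subset of the bundle over $\Gamma' \backslash D$. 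Taking the union and pulling back via the definable period map $X \to \Gamma' \backslash D$ then yields $S_q$ as a finite union of definable sets in $E$; verifying that the stabilizer quotients and the induced bundle descent operations all remain in the $\RRanexp$-category is the technical heart of the argument, and this is where the ``exercises about algebraic groups'' promised in the introduction enter.
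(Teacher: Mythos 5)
Your proposal follows essentially the same route as the paper: replace the monodromy group by the full arithmetic group $\Gamma = O(\HZ,Q)$, use the Bakker--Klingler--Tsimerman definability of the (Weil-operator) period map and the finiteness of $\Gamma$-orbits on $\{v\in\HZ : Q(v,v)=q\}$, and then reduce the definability of the self-dual locus in a single orbit to a Siegel-set comparison between $G=O(\HQ,Q)$ and the stabilizer $G_a$ of a representative. The paper carries out exactly the step you flag as the ``technical heart'' via \cref{prop:Siegel-Ga} and \cref{prop:definable-orbit}, working with the symmetric space $G(\RR)/K$ directly rather than the period domain $D$, which is a cosmetic simplification since the Weil operator depends only on the image in $G(\RR)/K$.
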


\newpar
By analogy with the theorem of Cattani, Deligne, and Kaplan, it is a natural to ask
whether the locus of self-dual classes is actually real (semi-)algebraic, meaning
actually definable in the much smaller structure $\RRalg$. Simple examples in
dimension one show that semi-algebraic is the best one can hope for in general. We do
not know the answer to this question.

\newpar
Several useful variants of the main result can be obtained by tensoring with certain
auxiliary Hodge structures. The first one is the analogue of \cref{thm:main}
for integral classes that are anti-self-dual.

\begin{corollary} \label{cor:anti-self-dual}
	Let $\shH$ be a polarized integral variation of Hodge structure of even weight
	on a nonsingular complex algebraic variety $X$. For each $q \geq 1$, the set
	\[
		\menge{(x,v) \in E}{\text{$v \in E_x$ is integral, $C_x v = -v$, and
		$Q_x(v,v) = -q$}}
	\]
	is a definable, closed, real-analytic subspace of $E$, and the restriction of $p
	\colon E \to X$ to this set is proper with finite fibers.
\end{corollary}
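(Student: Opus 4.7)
The plan is to reduce Corollary~\ref{cor:anti-self-dual} to Theorem~\ref{thm:main} by tensoring $\shH$ with a fixed auxiliary polarized Hodge structure $T$ that carries a distinguished anti-self-dual integral class, thereby trading anti-self-duality of $v$ for self-duality of $v \otimes t$.

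First I would exhibit a polarized integral Hodge structure $T$ of weight~$2$ together with a primitive integral class $t \in T_{\ZZ}$ such that $C_T t = -t$ and $Q_T(t,t) = -1$. A simple choice is $T_{\ZZ} = \ZZ^2$ with $T^{2,0} = \CC \cdot (i, 1) \subseteq \CC^2$ (so that $T^{1,1} = 0$ and $T^{0,2} = \overline{T^{2,0}}$) and polarization $Q_T = -I$; the polarization axioms $Q_T(T^{2,0}, T^{2,0}) = 0$ and $i^{2-0} Q_T(v, \bar v) > 0$ for $0 \neq v \in T^{2,0}$ are a direct computation. Since $T_{\CC} = T^{2,0} \oplus T^{0,2}$ has no $(1,1)$ part, the Weil operator $C_T$ acts as $-\id$ on all of $T_{\CC}$, and $t = (1,0) \in T_{\ZZ}$ has the required properties.

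Next, form $\shH' = \shH \otimes T$, which is a polarized integral variation of Hodge structure on $X$ of even weight $2k+2$, with underlying holomorphic vector bundle $E' \cong E \otimes_{\CC} T_{\CC}$. Applying Theorem~\ref{thm:main} to $\shH'$ and the integer $q \geq 1$ gives that
\[
	B_q = \menge{(x,w) \in E'}{w \text{ integral}, \, C_x' w = w, \, Q_x'(w,w) = q}
\]
is a definable, closed, real-analytic subspace of $E'$, with the projection $B_q \to X$ proper with finite fibers. The fiberwise linear map $\iota \colon E \into E'$, $v \mapsto v \otimes t$, is an algebraic, hence $\RRanexp$-definable, closed embedding of vector bundles over $X$. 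An elementary check shows that $\iota^{-1}(B_q)$ coincides with the set described in Corollary~\ref{cor:anti-self-dual}: in one direction, if $C_x v = -v$ and $Q_x(v,v) = -q$ then $C_x'(v \otimes t) = (-v) \otimes (-t) = v \otimes t$ and $Q_x'(v \otimes t, v \otimes t) = (-q)(-1) = q$; in the other direction, primitivity of $t$ in $T_{\ZZ}$ ensures that integrality of $v \otimes t$ forces integrality of $v$, and the conditions defining $B_q$ translate back to anti-self-duality and $Q_x(v,v) = -q$.

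The corollary then follows formally: the preimage of a definable closed real-analytic subspace under a definable closed embedding is again definable, closed, and real-analytic in $E$, and the restriction of $p \colon E \to X$ factors as the closed embedding $\iota$ followed by the proper-with-finite-fibers map $B_q \to X$, so it inherits both properties. The only step demanding any verification at all is that $\shH \otimes T$ is genuinely a polarized integral variation of Hodge structure of even weight — routine, and not a real obstacle.
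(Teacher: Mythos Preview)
Your argument is correct and follows the same strategy as the paper: tensor $\shH$ with an auxiliary polarized integral Hodge structure of weight~$2$ and apply Theorem~\ref{thm:main}. The only difference is the choice of auxiliary structure. The paper uses the rank-$3$ Hodge structure of Example~\ref{ex:HS2} (the symmetric square of $H^1$ of the Gaussian elliptic curve, with Hodge numbers $(1,1,1)$), embedding $v \mapsto (0,v,0)$; this single choice lets them read off both Corollary~\ref{cor:anti-self-dual} and the even-weight case of Corollary~\ref{cor:pairs} at once. Your rank-$2$ structure with Hodge numbers $(1,0,1)$ and $C_T = -\id$ is more economical for this corollary alone, and your verification that $\iota^{-1}(B_q)$ is exactly the anti-self-dual locus is clean. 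Either choice works; the paper's is slightly less direct here but amortizes over the two corollaries.
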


The second one is a generalization to polarized integral variations of Hodge
structure of arbitrary weight, where we now consider pairs of integral classes that
are related by the Weil operator.

\begin{corollary} \label{cor:pairs}
	Let $\shH$ be a polarized integral variation of Hodge structure on a nonsingular
	complex algebraic variety $X$. For each $q \geq 1$, the set
	\[
		\menge{(x,v,w) \in E \times_X E}{\text{$v,w \in E_x$ are integral, $v = C_x w$, and
		$Q_x(v,w) = q$}}
	\]
	is a definable, closed, real-analytic subspace of $E \times_X E$, and the
	restriction of $p \colon E \times_X E \to X$ to this set is again proper with finite
	fibers.
\end{corollary}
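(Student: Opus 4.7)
The plan is to deduce the corollary from Theorem~\ref{thm:main} and Corollary~\ref{cor:anti-self-dual} by interpreting the pair condition $v = C_x w$ as a self-duality (or self-/anti-self-duality) statement in a related polarized variation of Hodge structure. We treat the two parities of the weight $n$ of $\shH$ separately.

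For odd $n$, fix the polarized weight-$1$ integral Hodge structure $V = H^1(\CC/\ZZ[i], \ZZ)$, with integral basis $e, f$ satisfying $Q_V(e, f) = 1$, $C_V e = -f$, $C_V f = e$. Then $\shH' \defeq \shH \tensor V$ is a polarized integral VHS on $X$ of even weight $n+1$, and the algebraic bundle isomorphism $\Phi \colon E \times_X E \to E'$, $(x, v, w) \mapsto (x, v \tensor e + w \tensor f)$, identifies the integral lattice of $\shH'_x$ with $\shHZx \oplus \shHZx$. A direct computation, using $C_x^2 = -\id$ for odd $n$, shows that $\Phi(x, v, w)$ is self-dual in $\shH'_x$ precisely when $v = C_x w$, and that in that case $(Q \tensor Q_V)(\Phi(x,v,w), \Phi(x,v,w)) = 2 Q_x(v, w)$. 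Hence $\Phi$ identifies the locus in the corollary with the locus of self-dual integral classes in $\shH'$ of self-intersection number $2q \geq 2$, and all asserted properties transfer through $\Phi^{-1}$ from Theorem~\ref{thm:main} applied to $\shH'$.

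For even $n$, use the decomposition $(v, w) \mapsto (u_+, u_-) \defeq (v + w, v - w)$. Since $v = C_x w$ and $C_x^2 = \id$, one has $C_x u_+ = u_+$ and $C_x u_- = -u_-$, so $u_+$ is an integral self-dual class and $u_-$ an integral anti-self-dual class. A short computation gives $Q_x(u_\pm, u_\pm) = 2 Q_x(v, v) \pm 2q$; combined with the Hodge-norm bound $\bigabs{Q_x(v, v)} \leq \norm{v}_H^2 = q$, this forces $Q_x(u_+, u_+) \in \{0, 2, \dots, 4q\}$, while $Q_x(u_-, u_-) = Q_x(u_+, u_+) - 4q$. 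Stratifying by $j = Q_x(u_+, u_+)$, the injective definable map $(v, w) \mapsto (u_+, u_-)$ embeds each stratum into the fiber product $\Sigma_j \times_X \Delta_{j - 4q}$, where $\Sigma_j$ (for $j \geq 1$) is the self-dual locus provided by Theorem~\ref{thm:main} and $\Delta_{j-4q}$ (for $j \leq 4q-1$) is the anti-self-dual locus provided by Corollary~\ref{cor:anti-self-dual}; the boundary strata $j = 0$ and $j = 4q$ reduce directly to a single application of Corollary~\ref{cor:anti-self-dual} or Theorem~\ref{thm:main}, respectively. Each stratum inherits the desired definability, closedness, real-analyticity, and properness with finite fibers over $X$, and the finite disjoint union of strata does as well.

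The main obstacle is essentially bookkeeping: verifying the tensor-product computations in the odd case, and in the even case checking that the stratification together with the embedding of each stratum as a definable subset of $\Sigma_j \times_X \Delta_{j-4q}$ (cut out by the lattice parity condition $u_+ \equiv u_- \pmod{2 \shHZx}$) truly preserves all the claimed properties. Beyond these verifications, no additional geometric input is required.
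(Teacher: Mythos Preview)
Your odd-weight argument is exactly the paper's: tensor with the weight-$1$ Hodge structure of the CM elliptic curve $\CC/\ZZ[i]$ and apply \cref{thm:main} to the resulting weight-$(n+1)$ variation on $\HZ \oplus \HZ$.

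For even weight your route diverges from the paper's. The paper tensors $\shH$ with the \emph{symmetric square} of that same weight-$1$ Hodge structure, a fixed weight-$2$ Hodge structure on $\ZZ^{\oplus 3}$ with Weil operator $(a_1,a_2,a_3)\mapsto(a_3,-a_2,a_1)$. In the resulting variation on $\HZ^{\oplus 3}$, self-duality of $(a_1,a_2,a_3)$ reads $a_1 = C_x a_3$ and $C_x a_2 = -a_2$, with self-intersection $2Q_x(a_1,a_3) - Q_x(a_2,a_2)$; restricting to the algebraic slice $a_2 = 0$ and applying \cref{thm:main} once (with parameter $2q$) gives exactly the pairs locus. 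This single tensor construction also yields \cref{cor:anti-self-dual} by looking instead at the slice $a_1 = a_3 = 0$, so the paper treats both corollaries uniformly.

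Your $u_\pm$ decomposition is correct but buys less and costs more bookkeeping: you need \cref{cor:anti-self-dual} as an input rather than obtaining it simultaneously, you must stratify by $j = Q_x(u_+,u_+) \in \{0,2,\dots,4q\}$, and you must argue that the parity condition $u_+ \equiv u_- \pmod{2\shHZx}$ cuts out a definable subset of $\Sigma_j \times_X \Delta_{j-4q}$. The last point is fine---since the projection to $X$ has finite fibers and integral classes are locally constant in a flat trivialization, the parity condition is locally constant, hence picks out a union of the (finitely many, definable) connected components---but it is an extra verification the paper's tensor trick avoids entirely.
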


Note that when $H$ is an integral Hodge structure of \emph{odd} weight, the Weil
operator $C \in \End(\HR)$ satisfies $C^2 = -\id$, and so its eigenvalues are the two
complex numbers $\pm i$. The condition $v = Cw$ is equivalent to
\[
	C(v + iw) = i(v + iw),
\]
which is saying that $v + iw \in \HZ \tensor_{\ZZ} \ZZ[i]$ is an eigenvector of the
Weil operator that is integral with respect to the Gaussian integers $\ZZ[i]$. 

\newpar
Unlike in the case of Hodge classes, the locus of self-dual (or anti-self-dual)
classes is in general not a complex analytic subset of the vector bundle $E$, hence
in particular not algebraic. The reason is that 
the Weil operator $C_x \in \End(E_x)$ depends real analytically -- but not complex
analytically -- on the point $x \in X$, which means that $C_x v = \pm v$
is not a holomorphic condition. We intend to discuss both the local structure of the
locus of self-dual classes, and its more precise behavior near a divisor with normal
crossing singularities, in a future paper. Here we only give two examples to show
what these loci can look like in practice.

\newpar
Our first example concerns anti-self-dual classes on  K3 surfaces; these show up naturally
in Verbitsky's study of ergodic complex structures on hyperk\"ahler manifolds
\cite{Verb1,Verb2}.

\begin{example}
	Let $S$ be a (not necessarily algebraic) K3 surface and let $\Lambda_\mathbb{Z}=
	H^2(S,\ZZ)$ together with the cup product pairing. The period domain $D$ parametrizing
	the Hodge structures of K3 surfaces is a complex manifold of dimension $20$;
	concretely, if $H^{2,0}(S) =
	\CC \sigma$, one has 
	\[
		D = \menge{[\sigma] \in \PP\Lambda_\mathbb{C}\cong \PP^{21}}{\text{$Q(\sigma, \sigma) = 0$ and
		$Q(\sigma, \bar{\sigma}) < 0$}}.
	\]
	The set of points where a given integral class $v \in \Lambda_\mathbb{Z}$ is
	anti-self-dual, of Hodge type $(2,0) + (0,2)$, is easily seen to be
	\[
		\menge{[\sigma] \in \PP^{21}}{
			Q(\sigma, \bar{\sigma}) v = Q(v, \bar{\sigma}) \sigma
		+ Q(v, \sigma) \bar{\sigma})}
	\]
	This is a totally real submanifold of real dimension $20$.
\end{example}
Using Ratner theory, Verbitsky shows that for any finite index subgroup
$\Gamma\subset\mathrm{O}(\Lambda_\ZZ)$, orbits $\Gamma p$ of elements $p \in
\Lambda_\mathbb{Z}$ come in three flavors:  closed orbits, dense orbits, and orbits whose closures are the $\Gamma$-orbit of an anti-self-dual locus.  The three behaviors correspond to the three possibilities $\rk ((H^{2,0}\oplus H^{0,2})\cap \Lambda_\mathbb{Z})=2,0,1$, respectively, for the Hodge structure associated to $p$.  The same analysis holds more generally for the period domain associated to the degree two cohomology of any (possibly singular) hyperk\"ahler variety, and is important for instance in the proof of the global Torelli theorem in the singular case \cite{BL}.  

\newpar
Our second example is self-dual classes in certain nilpotent orbits. This is less
geometric, but provides us with a large family of examples. For the general theory,
see \cite[\S3]{CKS} and the survey paper \cite[\S3]{CK} by Cattani and Kaplan.

\begin{example}
	Let $\HZ$ be a free $\ZZ$-module of finite rank, and let $Q \colon \HZ
	\tensor_{\ZZ} \HZ \to \ZZ$ be a nondegenerate symmetric bilinear pairing. Suppose
	that we have a representation $\rho \colon \sltwo \to \End(\HC)$ of the Lie
	algebra $\sltwo$, such that
	\[
		N = \rho \begin{pmatrix} 0 & 1 \\ 0 & 0 \end{pmatrix} \in \End(\HQ)
		\quad \text{and} \quad
		Y = \rho \begin{pmatrix} 1 & 0 \\ 0 & -1 \end{pmatrix} \in \End(\HR)
	\]
	satisfy $Q(Nv,w) + Q(v,Nw) = 0$ and $Q(Yv,w) = Q(v,Yw)$ for all $v,w \in \HC$, and
	such that $e^N \in \End(\HZ)$. Let $W_{\bullet}(N)$ denote the weight filtration
	of the nilpotent operator $N$. Let $F$ be a decreasing filtration of $\HC$
	such that
	\[
		Y(F^p) \subseteq F^p \quad \text{and} \quad N(F^p) \subseteq F^{p-1}
		\quad \text{for all $p \in \ZZ$.}
	\]
	Further assume that $\Fsh = e^{iN} F$ is the Hodge filtration of an integral Hodge
	structure of even weight on $\HZ$, polarized by the pairing $Q$. Then it is known
	\cite[Proposition~3.9]{CK} that the nilpotent orbit
	\[
		\HH \to D, \quad z \mapsto e^{zN} F,
	\]
	descends to a polarized integral variation of Hodge structure on $\dst$, whose
	monodromy transformation is $T = e^N$. Let us describe the
	locus of points in $\HH$ where a given integral class $v \in \HZ$ is self-dual.
	Write $z = x + iy$. Let $\Csh \in \End(\HR)$ denote the Weil operator of the Hodge
	structure $\Fsh$. From the identity
	\[
		e^{zN} F = e^{xN} e^{iyN} F 
		= e^{xN} e^{-\half \log y \, Y} e^{iN} e^{\half \log y \, Y} F
		= e^{xN} e^{-\half \log y \, Y} \Fsh
	\]
	and the fact that both exponential factors are elements of the real Lie group
	$G(\RR)$, it follows that the Weil operator of the Hodge structure $e^{zN} F$ is
	\[
		e^{xN} e^{-\half \log y \, Y} \Csh e^{\half \log y \, Y} e^{-xN}.
	\]
	The set of points $z \in \HH$ where our integral class $v \in \HZ$ is self-dual is
	therefore defined by the simple equation
	\begin{equation} \label{eq:Csh}
		\Csh \Bigl( e^{\half \log y \, Y} e^{-xN} v \Bigr)
		= e^{\half \log y \, Y} e^{-xN} v.
	\end{equation}
	At each point, the Hodge norm of $v$ is of course equal to $Q(v,v)$. If the set
	contains points with $y = \Im z$ arbitrarily large, then necessarily $v \in
	W_0(N)$. Since 
	\[
		W_0(N) = \bigoplus_{\ell \leq 0} E_{\ell}(Y),
	\]
	we have a decomposition $v = v_0 + v_{-1} + \dotsb$, where $Y v_{\ell} = \ell
	v_{\ell}$.  Now the Weil operator $\Csh$ interchanges the two weight spaces
	$E_{\pm \ell}(Y)$, because $Y \Csh + \Csh Y = 0$, for example by
	\cite[formulas~(6.35)]{CKS}. Since $e^{\half \log y \, Y}
	e^{-xN} v \in W_0(N)$, the identity in \eqref{eq:Csh} implies that $e^{\half \log
	y \, Y} e^{-xN} v \in E_0(Y)$, and hence that
	\[
		v = e^{xN} e^{-\half \log y \, Y} v_0 = e^{xN} v_0.
	\]
	Now there are two possibilities. Either $Nv = 0$ and $v = v_0$, or $Nv \neq 0$. In
	the first case, $v$ is self-dual at every point $z \in \HH$; in the second case,
	the equation $v = e^{xN} v_0$ uniquely determines the value of $x \in \RR$, and 
	$v$ is self-dual along the vertical ray $\Re z = x$. The connected components of
	the locus of self-dual classes are therefore of two different kinds: one kind
	projects isomorphically to the entire punctured disk $\dst$; the other to a single
	angular ray in $\dst$.
\end{example}

\def\R{\mathbb{R}}
\def\C{\mathbb{C}}
\def\Z{\mathbb{Z}}
\def\alg{\mathrm{alg}}
\section{Background on definability}
\newpar 
The theory of o-minimal structures provides a precise notion of tameness for subsets of euclidean space and functions on them.  It is flexible enough to allow for complicated constructions but restrictive enough to imply strong finiteness properties.
A general reference for this section is \cite{vdD}.

\newpar
To formalize this notion, we first introduce a way to describe collections of subsets of $\R^n$ which are closed under a variety of natural operations.

\begin{definition}\label{defn structure}
A structure $S$ is a collection $(S_n)_{n\in\mathbb{N}}$ where each $S_n$ is a set of subsets of $\R^n$ satisfying the following conditions:
\begin{enumerate}
\item Each $S_n$ is closed under finite intersections, unions, and complements;
\item The collection $(S_n)_{n\in\mathbb{N}}$ is closed under finite Cartesian products and coordinate projections;
\item For every polynomial $P\in\R[x_1,\ldots,x_n]$, the zero set 
\[(P=0):=\{x\in\R^n\mid P(x)=0\}\subset\R^n\]
 is an element of $S_n$.
\end{enumerate}
We refer to an element $U\in S_n$ as an $S$-definable subset $U\subset \R^n$.  For $U\in S_n,$ and $V\in S_m$, we say a map $f:U\to V$  of $S$-definable sets is $S$-definable if the graph is as a subset of $\R^{m+n}$.  When the structure $S$ is clear from context, we will often just refer to ``definable" sets and functions.
\end{definition}

\newpar For each $n$ taking $S_n$ to be the Boolean algebra generated by real algebraic subsets $(P=0)$ of $\R^n$, the resulting $S$ is not a structure.  For example, the algebraic set $(x^2-y=0)\subset \R^2$ projects to the semialgebraic set $(y\geq 0)\subset \R$.  On the other hand, if we take $S_n$ to be the Boolean algebra generated by real semialgebraic subsets $(P\geq 0)$ of $\R^n$, then by the Tarski--Seidenberg theorem the resulting $S=\R_\alg$ is a structure. 

\begin{note}
	Tarski--Seidenberg is usually phrased as quantifier elimination for the real ordered field.  Indeed, the above axioms for a structure say that definable sets are closed under first order formulas, as intersections, unions, and complements correspond to the logical operators ``and", ``or", and ``not", while the projection axiom corresponds to universal and existential quantifiers.  For this reason, structures have been studied extensively in model theory.
\end{note}

\newpar Surprisingly, a good notion of tame structure can be achieved by simply restricting the definable subsets of the real line.
\begin{definition}
	A structure $S$ is said to be o-minimal if $S_1=(\R_{\alg})_1$\textemdash that is, if the $S$-definable subsets of the real line are exactly finite unions of intervals.
\end{definition}

Sets and functions which are definable in an o-minimal structure have very nice properties, including the following.  Here we fix an o-minimal structure $S$ and by ``definable" we mean ``$S$-definable."
\begin{itemize}

\item For any definable function $f:U\to V$ with finite fibers, the fiber size $|f^{-1}(v)|$ is a definable function.  In particular, it is uniformly bounded, and for any $n$ the set
\[\{v\in V\mid |f^{-1}(v)|=n\}\]
is definable.

\item Any definable subset $U\subset\R^n$ admits a definable triangulation:  it is definably homeomorphic to a finite simplicial complex.  
\item Any definable subset $U\subset\R^n$ has a well-defined dimension, namely, the dimension as a simplicial complex for any definable triangulation.  Moreover, for any definable map $f:U\to V$ and any $n$ the set
\[\{v\in V\mid \dim f^{-1}(v)=n \}\]
is definable.
\item For any $k$ and any definable function $f:U\to \R$, there is a definable triangulation of $U$ such that $f$ is $C^k$ on each simplex.  As a consequence, any definable $U\subset\R^n$ can be partitioned into finitely many $C^k$-submanifolds of $\R^n$.

\end{itemize}

\newpar We give some examples of o-miminal structures. As discussed above, the structure $\R_{\alg}$ is o-minimal; in fact, it is the smallest o-minimal structure.  

Given a collection $\Sigma=(\Sigma_n)_{n\in\mathbb{N}}$ of subsets of $\R^n$ for each
$n$, we say the structure generated by $\Sigma$ is the smallest structure in which
each set in each $\Sigma_n$ is definable.  It is the structure whose definable sets
are given by first order formulas involving real polynomials, inequalities, and the
sets in $\Sigma$.  The structure $\R_{\exp}$ generated by the graph of the real
exponential $\exp:\R\to\R$ is o-minimal by a result of Wilkie
\cite{Wilkie:exponential}.  However, the function $\sin:\R\to\R$ is not definable in \textbf{any} o-minimal structure, as the definable set $\pi\Z=\sin^{-1}(0)$ is both discrete and infinite.

\newpar
To get a much larger o-minimal structure, 
let $\RRan$ be the structure generated by the graphs of all restrictions
$f|_{B(R)}$, where $f:B(R')\to\R$ is a real analytic functions on a finite radius
$R'<\infty$ open euclidean ball (centered at the origin) and $B(R)\subset B(R')$ is a
ball of strictly smaller radius $R<R'$.  Via the embedding $\R^n\subset
\R\mathbb{P}^n$, this is equivalent to the structure of subsets of $\R^n$ that are
subanalytic in $\R\mathbb{P}^n$.  As observed by van-den-Dries \cite{vdD},
Gabrielov's theorem of the complement implies that $\RRan$ is o-minimal.  Note
that while the sine function is not $\RRan$-definable, its restriction to any finite interval is.

Finally, let $\RRanexp$ be the structure generated by $\RRan$ and $\R_{\exp}$.  Then $\RRanexp$ is o-minimal by a result of van-den-Dries--Miller \cite{vdDM}.  Most of the applications to algebraic geometry currently use the structure $\RRanexp$, and this will be the default structure we work with.

\newpar

For applications, we typically wish to discuss definability for manifolds that don't arise as subsets of $\R^n$, and for this we need an appropriate notion of definable atlas. 

\begin{definition}
Let $M$ be a topological space and $S$ a structure.
\begin{itemize}
    \item A ($S$-)definable atlas $\{(U_i,\phi_i)\}$ consists of a finite open covering $U_i$ of $M$, and homeomorphisms $\phi_i:U_i\to V_i\subset\R^{n_i}$ such that
	\begin{enumerate}
	\item The $V_i$ and the pairwise intersections $V_{ij}:=\phi_i(U_i\cap U_j)$ are definable sets;
	\item The transition functions $\phi_{ij}:=\phi_j\circ \phi_i^{-1}:V_{ij}\to V_{ji}$ are definable.	
	\end{enumerate}
	\item If $M$ is equipped with a definable atlas $\{(U_i,\phi_i)\}$, we say a subset $Z\subset M$ is definable if each $\phi_i(U_i\cap Z)$ is.
	\item If $M,M'$ are equipped with definable atlases $\{(U_i,\phi_i)\}, \{(U'_{i'},\phi'_{i'})\}$, a map $f:M\to M'$ is definable if each $f^{-1}(U'_{i'})\subset M$ is definable and moreover for each $i,i'$ the composition
		\[(f\circ\phi_{i}^{-1})^{-1}(U'_{i'})\xrightarrow{\phi_i^{-1}}f^{-1}(U'_{i'})\xrightarrow{f}U'_{i'}\xrightarrow{\phi'_{i'}}V'_{i'}\]
	is ($S$-)definable.  	  
	\item We say two atlases $\{(U_i,\phi_i)\}, \{(U'_{i'},\phi'_{i'})\}$ on $M$ are equivalent if the identity map is definable with respect to $\{(U_i,\phi_i)\}$ on the source and $\{(U_{i'}',\phi'_{i'})\}$ on the target.
	\item A ($S$-)definable topological space is a topological space $M$ equipped with an equivalence class of definable atlases.  A morphism of ($S$-)definable topological spaces is a continuous map $f:M\to M'$ which is definable for any choice of atlases in the equivalence classes on the source and target.
	\item We likewise define ($S$-)definable manifolds (resp. ($S$-)definable complex manifolds) by in addition requiring that the charts map to open subsets of $\R^n$ (resp. $\C^n$) and that the transition functions are smooth (resp. holomorphic).  Here we make sense of definability in $\C^n$ via the identification $\C^n\cong \R^{2n}$ by taking real and imaginary parts. 
\end{itemize}

%We denote the category of $S$-definable topological spaces by $(S\textrm{-Top})$.
	
\end{definition}
\begin{example}
    Any complex algebraic variety $X$ naturally has the structure of an $S$-definable
	 topological space for any structure $S$.  It admits a finite covering $U_i$ by
	 affine algebraic varieties, each of which is a complex (hence real) algebraic
	 subset of some $\C^{n_i}$, and the transition functions are given by algebraic
	 functions.  Likewise, any nonsingular complex algebraic variety has a natural
	 structure as an $S$-definable complex manifold.  We denote this definable complex manifold by $X^\mathrm{def}$.
\end{example}

\newpar One common method of producing interesting definable topological spaces is by taking quotients of other definable topological spaces by definable group actions.

\begin{definition}Let $X$ be a locally compact Hausdorff definable topological space and $\Gamma$ a group acting on $X$ by definable homeomorphisms.  A definable fundamental set for the action of $\Gamma$ on $X$ is an open definable subset $F \subseteq X$ such that
\begin{enumerate}
\item $\Gamma \cdot F = X$,
\item the set $\{ \gamma \in \Gamma \, | \, \gamma \cdot F  \cap F \neq \varnothing \}$ is finite.
\end{enumerate}
\end{definition}

We shall require the following proposition:

\begin{proposition}[\emph{cf.}~{\cite[Proposition~2.3]{bbkt}}]
If $F$ is a definable fundamental set for the action of $\Gamma$ on $X$, then there exists a unique definable structure on $\Gamma \backslash X$ such that the canonical map $F \rightarrow \Gamma \backslash X$ is definable.
\end{proposition}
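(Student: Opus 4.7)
The plan is to transfer the definable structure from $F$ to $\Gamma \backslash X$ via the canonical projection $p \colon F \to \Gamma \backslash X$. Write $\Sigma \defeq \menge{\gamma \in \Gamma}{\gamma F \cap F \neq \varnothing}$, which is finite by hypothesis. Since $\Gamma \cdot F = X$, the map $p$ is a continuous surjection, and each fiber $p^{-1}(p(x))$ equals $\menge{\gamma x}{\gamma \in \Sigma} \cap F$, so $p$ realizes $\Gamma \backslash X$ as the quotient of $F$ by the equivalence relation
\[
    R = \menge{(x,y) \in F \times F}{y = \gamma x \text{ for some } \gamma \in \Sigma},
\]
which is definable as the finite union over $\gamma \in \Sigma$ of the graphs of the definable partial homeomorphisms $\gamma \colon F \cap \gamma^{-1}F \to F \cap \gamma F$. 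Furthermore $p$ is an open map, because for $U \subseteq F$ open one has $\pi^{-1}(p(U)) = \Gamma \cdot U = \bigcup_{\gamma \in \Gamma} \gamma U$, which is open in $X$. So $\Gamma \backslash X$ carries the quotient topology from $p$, and the task reduces to equipping $F/R$ with a compatible definable atlas.

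The core step is producing local definable sections of $p$. Starting from a finite definable atlas on $X$ (which restricts to one on $F$) and applying o-minimal cell decomposition to the finite family $\menge{\gamma \colon F \cap \gamma^{-1}F \to F}{\gamma \in \Sigma}$, I would partition $F$ into finitely many locally closed definable pieces on which $R$ is described by a controlled list of definable homeomorphisms. Grouping appropriately, I would obtain finitely many definable open subsets $F_1, \ldots, F_N \subseteq F$ with images $V_j = p(F_j)$ covering $\Gamma \backslash X$, such that each $p\vert_{F_j} \colon F_j \to V_j$ admits a definable section $\sigma_j$. Composing the $\sigma_j$ with the ambient charts on $F$ furnishes a candidate definable atlas $\{(V_j, \phi_j)\}$ on $\Gamma \backslash X$. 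The transition functions, restricted to definable pieces of each intersection $V_j \cap V_k$, are induced by the action of individual elements of $\Sigma$, hence are definable.

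Definability of $p \colon F \to \Gamma \backslash X$ follows because on each cell of the partition of $F$ it factors as an ambient chart on $F$ composed with the corresponding section $\sigma_j$ and the chart $\phi_j$. For uniqueness, suppose $\Gamma \backslash X$ carries another definable atlas $\{(V'_k, \phi'_k)\}$ making $p$ definable. The identity map from the constructed structure to this one is locally given by $p \circ \sigma_j$ on each $V_j$, a composition of definable maps, and the symmetric argument on the other side shows the two atlases are equivalent.

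I expect the main obstacle to lie in the construction of the sections $\sigma_j$. The finiteness of $\Sigma$ is essential: it makes $p$ uniformly finite-to-one and restricts the cell decomposition to a finite family of definable maps. Points $x \in F$ with nontrivial stabilizer $\Gamma_x = \menge{\gamma \in \Sigma}{\gamma x = x}$ (automatically a finite subgroup of $\Gamma$) are a source of complications, since near such points $p$ is not locally injective; the remedy is to allow the $F_j$ to overlap in the cover of $\Gamma \backslash X$, folding the ``extra'' sheets into the transition maps rather than the charts themselves, so that local injectivity of each $p\vert_{F_j}$ can be arranged by shrinking.
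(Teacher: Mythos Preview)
The paper does not give a proof of this proposition; it is simply quoted from \cite[Proposition~2.3]{bbkt}. So there is nothing in the paper to compare your argument against, and the relevant question is whether your sketch is correct as it stands.

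Your setup is fine: the finiteness of $\Sigma$, the description of the fibres of $p$, the definability of $R$, the openness of $p$, and the outline of the uniqueness argument are all reasonable. The gap is exactly where you flagged it, but your proposed remedy does not work. You claim that ``local injectivity of each $p\vert_{F_j}$ can be arranged by shrinking''. This is false at any point $x \in F$ with nontrivial stabiliser $\Gamma_x$. Since $F$ is open in $X$, every open $F_j \subseteq F$ containing $x$ contains a neighbourhood $U$ of $x$ in $X$; for $\gamma \in \Gamma_x \setminus \{e\}$ the set $U \cap \gamma^{-1}U$ is again a neighbourhood of $x$, and if $\gamma$ does not fix a neighbourhood of $x$ pointwise there are $y \in U \cap \gamma^{-1}U$ with $y \neq \gamma y$, so $p\vert_{F_j}$ is never injective. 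Concretely, take $X = \RR^2$ with $\Gamma = \ZZ/3\ZZ$ acting by rotation: any open fundamental set must contain a disk about the origin, on which $p$ is generically $3$-to-$1$, and an elementary fundamental-group argument shows that no continuous section of $p$ exists on any neighbourhood of the cone point in $\Gamma \backslash X$. This situation is not exotic for the paper's purposes: in the main application $\Gamma$ is an arithmetic subgroup acting on $G(\RR)/K$, and torsion elements of $\Gamma$ produce exactly such fixed points.

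What is missing is a construction of charts near fixed points that does not go through sections of $p$. One route is to first show that the quotient of a definable set by a \emph{finite} group acting by definable homeomorphisms is again a definable topological space (for instance by embedding via $\Gamma_x$-invariant definable functions), and then to observe that $p$, restricted to a small $\Gamma_x$-invariant neighbourhood of $x$ in $F$, factors as the finite quotient by $\Gamma_x$ followed by an open embedding into $\Gamma \backslash X$. Once such local models are in hand, your gluing and uniqueness arguments go through essentially as written.
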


\section{Background on Siegel sets}

\newpar
In this section, we give some background on Siegel sets for symmetric spaces of the
type that appear in the study of period mappings. We begin by recalling the
definition of a Siegel set; a good general discussion is \cite[\S\,2]{BJ}. Let $G$ be a
reductive $\QQ$-algebraic group. The set of real points $G(\RR)$ is a real Lie group,
and we fix a maximal compact subgroup $K \subseteq G(\RR)$. By \cite[Proposition~1.6]{BS},
this determines a Cartan involution $\theta$ of the $\RR$-algebraic group $G_{\RR}$,
whose fixed point set is $K$. Let $P \subseteq G$ be a minimal parabolic
$\QQ$-subgroup, and let $U \subseteq
P$ be its unipotent radical. The dimension of any maximal split $\QQ$-torus in $P$ is
called the $\QQ$-rank of $G$; we shall denote it by $\rk G$. The Levi quotient
$P/U$ is isomorphic to the product of a split $\QQ$-torus of dimension $\rk G$ and a
maximal anisotropic $\QQ$-subgroup.  According to \cite[Corollary~1.9]{BS}, there is a
unique Levi subgroup 
\[
	L = S \times M \subseteq P_{\RR}
\]
that maps isomorphically to the Levi quotient $P_{\RR} / U_{\RR}$ and is stable under
the Cartan involution $\theta$. In particular, $S$ is a split $\RR$-torus of
dimension $\rk G$ that is conjugate over $G_{\RR}$ to a maximal $\QQ$-split torus of
$G$, such that $\theta(g) = g^{-1}$ for every $g \in S(\RR)$; compare
\cite[Lemma~2.1]{Orr}. Moreover, $M$ is contained in the centralizer of $S$ in
$G_{\RR}$. The adjoint action of $S$ on the Lie algebra of $P_{\RR}$ determines a
root system, and we write $\Delta$ for the set of simple roots.

We use the definition of Siegel sets in \cite[\S\,2.2]{Orr}; for a discussion of how it
relates to the original definition in \cite[\S\,12]{Borel:IntroductionArithmeticGroups}, see \cite[\S\,2.3]{Orr}. 
For our purposes, a \emph{Siegel set} in $G(\RR)$, with respect to the maximal compact
subgroup $K$ and the minimal parabolic $\QQ$-subgroup $P$, is any set of the form
\[
	\Sieg(\Omega, t) = \Omega \cdot A_t \cdot K \subseteq G(\RR), 
\]
where $\Omega \subseteq U(\RR) M(\RR)^+$ is a compact set, $t > 0$
is a positive real number, and
\[
	A_t = \menge{g \in S(\RR)^+}{\text{$\chi(g) \geq t$ 
			for all simple roots $\chi \in \Delta$}}.
\]
We say that a Siegel set is \emph{subalgebraic} if $\Omega \subseteq U(\RR) M(\RR)^+$
is subalgebraic, meaning definable in the o-minimal structure $\RRalg$. 
\begin{note}
	More generally, it is known that any set of the form $\Sieg(\Omega, t)$ with
	$\Omega \subseteq P(\RR)$ compact is contained in a Siegel set in the above sense
	\cite[\S\,2.3]{Orr}.
\end{note}

\newpar
We are only going to be interested in Siegel sets with respect to a fixed maximal
compact subgroup. To emphasize this, we usually talk about \define{Siegel sets for
$K$}, meaning that the maximal compact subgroup $K$ in the definition is fixed,
whereas the minimal parabolic $\QQ$-subgroup $P$ is allowed to be arbitrary. For
later use, let us briefly recall how Siegel sets for different minimal parabolic
$\QQ$-subgroups are related to each other. Let $K \subseteq G(\RR)$ be a fixed
maximal compact subgroup, and $P \subseteq G$ be a minimal parabolic $\QQ$-subgroup.
Since all minimal parabolic $\QQ$-subgroups are conjugate to each other, any other
choice $P' \subseteq G$ has the form $P' = g P g^{-1}$ for a suitable element $g \in
G(\QQ)$.  Write $g = kp$ with $k \in K$ and $p \in P(\RR)^+$, so that
\[
	P'(\RR) = g P(\RR) g^{-1} = k P(\RR) k^{-1}.
\]
Now suppose that $\Sieg \subseteq G(\RR)$ is a Siegel set for $K$ and $P$. By
\cite[\S\,12.4]{Borel:IntroductionArithmeticGroups}, the translate $p \Sieg$ is contained in a larger
Siegel set $\Omega A_t K \subseteq G(\RR)$ with respect to $K$ and $P$; here $\Omega
\subseteq U(\RR) M(\RR)^+$ is compact and $t > 0$. Consequently,
\[
	g \Sieg \subseteq k \Omega A_t K = 
	k \Omega k^{-1} \cdot k A_t k^{-1} \cdot K,
\]
and the right-hand side is now a Siegel set with respect to $K$ and $P' = g P
g^{-1}$. This shows that any Siegel set for $K$ is contained in a $G(\QQ)$-translate
of a Siegel set with respect to $K$ and a fixed minimal parabolic $\QQ$-subgroup $P$.

\newpar \label{par:Hodge-G}
Now we specialize to the case that is of interest in the study of period
mappings. The general setting is as follows. Let $\HQ$ be a finite-dimensional
$\QQ$-vector space, of dimension $n = \dim \HQ$, equipped with a nondegenerate
symmetric bilinear form
\[
	Q \colon \HQ \tensor_{\QQ} \HQ \to \QQ.
\]
Further suppose that there is an endomorphism $C \in \End(\HR)$ of the real vector
space $\HR = \HQ \tensor_{\QQ} \RR$ that satisfies $C^2 = \id$, such that
\[
	\inner{\argbl}{\argbl}_C \colon \HR \tensor_{\RR} \HR \to \RR, \quad
	\inner{v}{w} = Q(v, Cw),
\]
is a positive definite inner product on $\HR$. By analogy with the case of Hodge
structures, we shall say that $C$ is a \define{Weil operator} for the pair $(\HQ, Q)$.

\newpar
The orthogonal group $G = O(\HQ, Q)$ is a reductive $\QQ$-algebraic group, in general
not connected, whose set of real points is the real Lie group
\[
	G(\RR) = \menge{g \in \Aut(\HR)}{\text{$Q(g v, g w) = Q(v,w)$ for all $v,w \in \HR$}}.
\]
Evidently, $C \in G(\RR)$. It is easy to see that an element $g \in
G(\RR)$ preserves the inner product $\inner{\argbl}{\argbl}_C$ if and only if $gC =
Cg$; therefore the subgroup
\[
	K = \menge{g \in G(\RR)}{gC = Cg}
\]
is compact. It is proved in \cite[(8.4)]{Schmid} that $K$ is actually a maximal
compact subgroup of $G(\RR)$, and that the associated Cartan involution is
given by the simple formula
\[
	\theta \colon G(\RR) \to G(\RR), \quad \theta(g) = C g C.
\]
The following result is well-known.

\begin{lemma} \label{lem:Weil}
	The symmetric space $G(\RR)/K$ parametrizes Weil operators for $(\HQ,
	Q)$, with the coset $gK$ corresponding to the Weil operator $gCg^{-1} \in \End(\HR)$. 
\end{lemma}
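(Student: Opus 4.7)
The plan is to prove the lemma via orbit-stabilizer applied to the conjugation action of $G(\RR)$ on the set $\mathcal{W}$ of Weil operators for $(\HQ, Q)$. Concretely, I will define
\[
	\Phi \colon G(\RR) \to \mathcal{W}, \quad g \mapsto g C g^{-1},
\]
and show that $\Phi$ is well-defined, surjective, and has fibers exactly the cosets $gK$.

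First, I would check that $gCg^{-1}$ is actually a Weil operator whenever $g \in G(\RR)$. The relation $(gCg^{-1})^2 = \id$ is immediate from $C^2 = \id$. The key point is that for any $v,w \in \HR$, since $g$ preserves $Q$,
\[
	Q\bigl(v, (gCg^{-1}) w\bigr) = Q\bigl(g^{-1} v, C g^{-1} w \bigr)
	= \inner{g^{-1} v}{g^{-1} w}_C,
\]
which is positive definite in $v = w$ because $\inner{\argbl}{\argbl}_C$ is. Hence $\Phi$ lands in $\mathcal{W}$.

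Next, I would compute the fiber of $\Phi$ over $C$: an element $g \in G(\RR)$ satisfies $gCg^{-1} = C$ if and only if $g$ commutes with $C$, which by definition is the condition for $g \in K$. Thus $\Phi^{-1}(C) = K$, and more generally $\Phi^{-1}(gCg^{-1}) = gK$, so $\Phi$ descends to an injection from $G(\RR)/K$ into $\mathcal{W}$.

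The main obstacle is surjectivity, and this I expect to follow from a standard linear-algebra observation. Since $C$ is self-adjoint with respect to $Q$ (a direct consequence of $Q(Cv,w)$ and $Q(v,Cw)$ agreeing, which in turn follows from the symmetry of the $C$-inner product) and satisfies $C^2 = \id$, giving a Weil operator is equivalent to giving an orthogonal decomposition $\HR = H^+ \oplus H^-$ into the $(\pm 1)$-eigenspaces of $C$; on $H^+$ the form $Q$ is positive definite, on $H^-$ it is negative definite, and the two subspaces are $Q$-orthogonal. By Sylvester's law of inertia (or Witt's extension theorem), $O(\HR, Q) = G(\RR)$ acts transitively on the set of such orthogonal splittings of $\HR$ into a maximal positive-definite subspace and its orthogonal complement. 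Pulling this transitivity back through the correspondence between Weil operators and splittings yields surjectivity of $\Phi$, completing the proof.
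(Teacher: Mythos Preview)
Your proof is correct and follows essentially the same approach as the paper: the well-definedness computation $Q(v, gCg^{-1}w) = \inner{g^{-1}v}{g^{-1}w}_C$ is identical, the identification of the fibers with cosets of $K$ is the same, and your surjectivity argument via the eigenspace splitting and Witt/Sylvester is exactly what the paper carries out explicitly by choosing orthonormal bases $e_1,\dotsc,e_n$ and $e_1',\dotsc,e_n'$ adapted to $C$ and $C'$ and letting $g$ send one to the other. The only difference is that the paper writes down the isometry $g$ by hand rather than citing Witt's theorem.
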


\begin{proof}
	All elements in the coset $gK$ give us the same operator $gCg^{-1} \in
	\End(\HR)$, which is a Weil operator for the pair $(\HQ, Q)$ because
	\[
		Q(v, g C g^{-1} w) = Q(g^{-1} v, C g^{-1} w) = \inner{g^{-1} v}{g^{-1} w}_C
	\]
	is positive definite. Conversely, suppose that $C' \in \End(\HR)$ is another Weil
	operator for $(\HQ, Q)$. Let $n = \dim \HR$. Since $Q$ has a fixed signature,
	we have
	\[
		\dim E_1(C') = \dim E_1(C) = p \quad \text{and} \quad
		\dim E_{-1}(C') = \dim E_{-1}(C) = n-p.
	\]
	Pick a basis $e_1, \dotsc, e_n \in \HR$ that is orthonormal for the inner product
	$\inner{\argbl}{\argbl}_C$, in such a way that $e_1, \dotsc, e_p \in E_1(C)$ and
	$e_{p+1}, \dotsc, e_n \in E_{-1}(C)$. Pick a second basis $e_1',
	\dotsc, e_n' \in \HR$ that is similarly adapted to $\inner{\argbl}{\argbl}_{C'}$
	and $C'$, and let $g \in \Aut(\HR)$ be the unique automorphism such that $g e_i =
	e_i'$ for $i = 1, \dotsc, n$. Then
	\[
		Q(e_i, C e_j) %= \inner{e_i}{e_j}_C = \inner{e_i'}{e_j'}_{C'}
		= Q(e_i', C' e_j') = Q(g e_i, C' g e_j) = Q(g e_i, g C e_j),
	\]
	and therefore $g \in G(\RR)$ by the nondegeneracy of $Q$. By construction, 
	$C' g = gC$, which makes $C' = g C g^{-1}$ equal to the image of the coset $gK$.
\end{proof}

\newpar
Now let us turn our attention to Siegel sets in $G(\RR)$. The $\QQ$-rank of $G$ and
the collection of minimal parabolic $\QQ$-subgroups $P \subseteq G$ can be described
concretely as follows. Let $r \geq 0$ be the Witt rank of $Q$, meaning the dimension
of a maximal $Q$-isotropic subspace of $\HQ$. Let
\[
	\{0\} \subset V_1 \subset V_2 \subset \dotsb \subset V_r
\]
be a maximal flag of isotropic subspaces, with $\dim V_i = i$. As explained in
\cite[\S\,11.16]{Borel:IntroductionArithmeticGroups}, the stabilizer $P$ of this flag is a minimal parabolic
$\QQ$-subgroup of $G$, and every minimal parabolic $\QQ$-subgroup arises in this way;
moreover, the $\QQ$-rank of $G$ is equal to $r$. Since $Q$ is nondegenerate, it is
possible to choose vectors $v_1', \dotsc, v_r' \in \HQ$ with the property that
\[
	Q(v_i, v_j') = [i = j] = \begin{cases}
		1 & \text{if $i=j$,} \\
		0 & \text{otherwise.}
	\end{cases}
\]
The $2r$-dimensional subspace spanned by $v_1, \dotsc, v_r, v_1', \dotsc, v_r'$ is
uniquely determined by $V_r$; so is its orthogonal complement with respect to $Q$.
Let $U \subseteq P$ denote the unipotent radical; concretely, $g \in U(\QQ)$ iff
$g v_i - v_i \in V_{i-1}$ for $i = 1, \dotsc, r$.

\newpar
The unique Levi subgroup $S \times M \subseteq P_{\RR}$ that is stable under the
Cartan involution can be described concretely as follows. Using the Gram--Schmidt
process, construct an orthonormal basis $e_1, \dotsc, e_r \in V_r$ relative to the
inner product $\inner{\argbl}{\argbl}_C$, in such a way that
\[
	V_i \tensor_{\QQ} \RR = \RR e_1 \oplus \dotsb \oplus \RR e_i
\]
for $i = 1, \dotsc, r$. Since $V_r$ is isotropic, the vectors $e_1, \dotsc, e_r, C
e_r, \dotsc, C e_1$ are still orthonormal, and we get an embedding
\[
	s \colon G_{m, \RR} \times \dotsb \times G_{m, \RR} \into P_{\RR}
\]
by letting $s(\lambda_1, \dotsc, \lambda_r)$ act as multiplication by
$\lambda_i$ on the vector $e_i$, as multiplication by $\lambda_i^{-1}$ on the vector
$C e_i$, and as the identity on the orthogonal complement of $e_1, \dotsc, e_r, C
e_r, \dotsc, C e_1$. The image of this embedding is the desired $\RR$-torus $S$. 
The other factor of the Levi subgroup $S \times M$ has as its set of real points
\[
	M(\RR) = \menge{g \in G(\RR)}{\text{$g e_i = CgC e_i = e_i$ for all $i=1, \dotsc,
	r$}},
\]
which is clearly stable under the Cartan involution $\theta(g) = C g C$. Note in
particular that $M(\RR)$ preserves the orthogonal complement of $e_1, \dotsc, e_r, C
e_r \dotsc, C e_1$.

\newpar
We also need to know the set of simple roots $\Delta$ for the action of $S$ on the
Lie algebra of $P_{\RR}$. These are computed in \cite[\S\,11.16]{Borel:IntroductionArithmeticGroups}. There are two
cases, depending on the value of the integer $n-2r \geq 0$:
\begin{enumerate}
	\item If $n = 2r$, the simple roots are $\lambda_1/\lambda_2, \dotsc,
		\lambda_{r-1}/\lambda_r$ and $\lambda_{r-1} \lambda_r$; this is the case where
		$(\HQ, Q)$ is split, hence isomorphic to a sum of hyperbolic planes.
	\item If $n > 2r$, the simple roots are $\lambda_1/\lambda_2, \dotsc,
		\lambda_{r-1}/\lambda_r$ and $\lambda_r$; this is the case where
		$(\HQ, Q)$ has a nontrivial anisotropic summand.
\end{enumerate}

\newpar
As in the case of the general linear group, Siegel sets in $G(\RR)$ are closely
related to the reduction theory of positive definite quadratic forms. We are going to
make this idea precise by comparing Siegel sets for the two $\QQ$-algebraic groups $G
= O(\HQ, Q)$ and $\Gtl = \GL(\HQ)$. In $\Gtl$, we have the maximal compact subgroup
\[
	\Ktl = \menge{g \in \Gtl(\RR)}{\text{$\inner{g^{-1} v}{g^{-1} w}_C = \inner{v}{w}_C$
	for $v,w \in \HR$}}.
\]
The associated Cartan involution is $g \mapsto C (g^t)^{-1} C$, where $g^t$ means the
adjoint of $g$ with respect to the nondegenerate pairing $Q$. Clearly, $\Ktl \cap
G(\RR) = K$. The relevant minimal parabolic $\QQ$-subgroup $\Ptl \subseteq \Gtl$ is
obtained as follows. Complete the given flag $V_1 \subset \dotsb \subset
V_r$ of isotropic subspaces to a maximal flag
\[
	\{0\} \subset V_1 \subset \dotsb \subset V_r \subset \dotsb \subset V_{n-r}
	\subset V_{n-r+1} \subset \dotsb \subset V_n = \HQ
\]
by defining $V_{n-r} = V_r^{\perp}$ and $V_{n-r+i} = V_r^{\perp} \oplus \QQ v_r'
\oplus \dotsb \oplus \QQ v_{r+1-i}'$ for $i = 1, \dotsc, r$, and then filling in the
$n - 2r$ steps in between $V_r$ and $V_r^{\perp}$. Let $\Ptl \subseteq \Gtl$ be the
stabilizer of this maximal flag, and let $\Utl \subseteq \Ptl$ be its unipotent
radical; then 
\[
	\Ptl \cap G \subseteq P \quad \text{and} \quad \Utl \cap G = U.
\]
Using the Gram-Schmidt process, construct an orthonormal basis $e_1,
\dotsc, e_n \in \HR$ with the property that $V_i \tensor_{\QQ} \RR = \RR e_1 \oplus
\dotsb \oplus \RR e_i$; a short calculation shows that
\[
	\text{$e_{n-r+1} = C e_r$, $e_{n-r+2} = C e_{r-1}$, \dots, $e_n = C e_1$.}
\]
In this case, the Levi subgroup $\Stl \times \Mtl$ reduces to the split $\RR$-torus
$\Stl \subseteq \Ptl_{\RR}$ consisting of all diagonal matrices $\diag(\lambda_1,
\dotsc, \lambda_n)$ with respect to the basis $e_1, \dotsc, e_n$; with a little bit
of work, one can show that $\Stl \cap G(\RR) = S$. The simple roots are computed in
\cite[\S\,1.14]{Borel:IntroductionArithmeticGroups} to be $\lambda_1/\lambda_2, \dotsc, \lambda_{n-1}/\lambda_n$. 

\newpar
We can now compare Siegel sets in $G(\RR) = O(\HR, Q)$ and $\Gtl(\RR) =
\GL(\HR)$.  The result is a more precise version of a general theorem by Orr
\cite[Theorem~1.2]{Orr}, with a small correction contained in \cite{OS}.

\begin{proposition} \label{prop:Siegel}
	Any Siegel set in $G(\RR)$ for the maximal compact subgroup $K$ is contained in
	at most two $G(\QQ)$-translates of a Siegel set in $\Gtl(\RR)$ $($for $\Ktl)$.
\end{proposition}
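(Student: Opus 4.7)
Write $\Sieg_G(\Omega,t) = \Omega \cdot A_t \cdot K$ and factor $\Omega \subseteq \Omega_U \cdot \Omega_M$ with $\Omega_U \subseteq U(\RR)$ and $\Omega_M \subseteq M(\RR)^+$ compact, so every $g \in \Sieg_G$ has the form $g = u\,m\,a\,k$ with $u \in \Omega_U$, $m \in \Omega_M$, $a \in A_t$, $k \in K$. The plan is to compute the Iwasawa decomposition of $g$ in $\Gtl(\RR) = \Utl(\RR) \cdot \Stl(\RR)^+ \cdot \Ktl$ relative to $\Ptl$ and $\Ktl$, and verify that the resulting torus part lies in a $\Gtl$-Siegel domain $A'_{t'} \subseteq \Stl(\RR)^+$ after multiplying $g$ by one of at most two elements of $G(\QQ)$.

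The key observation is that $M(\RR)^+$ fixes $e_1, \dots, e_r$ and $e_{n-r+1}, \dots, e_n$ pointwise, acting only on the middle block $\operatorname{span}(e_{r+1}, \dots, e_{n-r})$, while $a \in S(\RR)^+$ is the identity on that middle block. Writing the $\Gtl$-Iwasawa decomposition of $m$ as $m = \tilde u_m \tilde s_m \tilde k_m$ (each factor block-diagonal with identity on the outer $r$-blocks) and using $am = ma$ in $G$, one obtains
\[
g \;=\; (u\,\tilde u_m) \cdot (a\,\tilde s_m) \cdot (\tilde k_m\,k),
\]
which is already a $\Gtl$-Iwasawa decomposition, with $u\,\tilde u_m \in \Utl(\RR)$ in a compact set and $\tilde k_m\,k \in \Ktl$. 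Hence $\tilde s_g = a\,\tilde s_m$, and it remains to examine its $\Gtl$-simple roots $\mu_i/\mu_{i+1}$.

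Using $a = \diag(\lambda_1, \dots, \lambda_r, 1, \dots, 1, \lambda_r^{-1}, \dots, \lambda_1^{-1})$ and the fact that $\tilde s_m$ varies in a fixed compact subset of $\Stl(\RR)^+$, each ratio $\mu_i/\mu_{i+1}$ is either a $G$-simple root of $a$ (hence $\geq t$) times a bounded factor, or a ratio of entries of $\tilde s_m$ bounded away from $0$---\emph{except} the central ratio $\mu_r/\mu_{r+1}$ in case~(1) with $n = 2r$, which equals $\lambda_r^2$ and is not bounded below by the $A_t$-conditions $\lambda_{r-1}/\lambda_r,\,\lambda_{r-1}\lambda_r \geq t$. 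In case~(2) ($n > 2r$) we conclude immediately that $\Sieg_G \subseteq \Sieg_{\Gtl}(\Omega', t')$ for suitable $\Omega', t'$, using a single translate (the identity). In case~(1) we split according to $\lambda_r$: if $\lambda_r \geq 1$ the central ratio is $\geq 1$ and the same conclusion holds; if $\lambda_r < 1$, translate $g$ by the $\QQ$-rational reflection $\tau_r \in G(\QQ)$ that swaps $v_r$ with $v_r'$ and fixes all other $v_i, v_i'$. This $\tau_r$ acts on the $\QQ$-split torus $T$ by the single sign change $\lambda_r \mapsto \lambda_r^{-1}$---a Weyl element of $O(r,r)$ not contained in $SO(r,r)$---so that the Iwasawa torus of $\tau_r^{-1} g$ has central ratio $\lambda_r^{-2} > 1$, reducing to the previous subcase. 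Therefore $\Sieg_G \subseteq \Sieg_{\Gtl} \cup \tau_r \Sieg_{\Gtl}$.

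The delicate step is the verification in case~(1) with $\lambda_r < 1$: since the $\theta$-stable torus $S$ is only a $U(\RR)$-conjugate of the $\QQ$-split torus $T$ (conjugated by some $\eta \in U(\RR)$), the $\QQ$-rational $\tau_r$ normalizes $S$ only up to a real correction in $U(\RR)$. Tracking this correction carefully and showing that it is absorbed by enlarging the compact part of the target Siegel set in $\Gtl(\RR)$ is the heart of the argument; once handled, the containment in at most two $G(\QQ)$-translates follows.
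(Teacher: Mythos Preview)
Your overall architecture matches the paper's: separate the cases $n>2r$ and $n=2r$, and in the split case split again according to whether $\lambda_r\geq 1$ or $\lambda_r<1$. Your treatment of the non-split case is essentially the paper's (you use the Iwasawa decomposition of $m$ where the paper uses the polar decomposition $m=\ptl_m\ktl_m$, but both amount to the same thing since $m$ lives in the middle block and $a$ commutes with it).

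The gap is in the split case with $\lambda_r<1$. You correctly identify that a second $G(\QQ)$-translate is needed and name the right rational reflection $\tau_r$ (the paper calls it $g$), but you then explicitly defer the verification, writing that ``tracking this correction carefully \dots\ is the heart of the argument; once handled, the containment \dots\ follows.'' That is precisely the step that cannot be waved through. The issue is not that a bounded correction must be absorbed into the compact part; it is that after the swap the unipotent factor might fail to lie in $\Utl(\RR)$ at all. The paper handles this by first conjugating by the \emph{orthogonal} permutation $\sigma\in\Ktl$ that swaps $e_r$ and $e_{r+1}$ (so $a^{\sigma}\in\Atl_{\min(1,t)}$ is automatic), and then proving the nontrivial fact that $u^{\sigma}\in\Utl(\RR)$: one must show the $(r{+}1,r)$-entry of $u$ vanishes, and this uses that $u\in G(\RR)=O(\HR,Q)$ via the identity $Q(ue_{r+1},e_{r+1})=Q(u^{-1}e_{r+1},e_{r+1})$, which forces that entry to equal its own negative. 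Only after this computation does the paper pass from the real $\sigma$ to the rational $\tau_r$, using that $\tau_r\sigma\in\Ptl(\RR)$, so that the $\sigma$-translate of a Siegel set is contained in the $\tau_r$-translate of a larger one.

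Your direct route through $\tau_r$ bypasses the clean orthogonal picture, and the ``real correction in $U(\RR)$'' you allude to does not obviously stay upper triangular after the swap; without the analogue of the $x=-x$ computation, there is no reason the correction is bounded or even lies in $\Utl(\RR)$. So as written, the proposal is incomplete exactly at the point it flags as delicate.
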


\begin{proof}
	We need to consider the two cases $n > 2r$ and $n = 2r$ separately. Let us first
	deal with the easier case $n > 2r$ (where $\HQ$ has a nontrivial anisotropic
	summand). Without loss of generality, we can assume that the Siegel set in
	$G(\RR)$ has the form
	\[
		\Omega_U \cdot \Omega_M \cdot A_t \cdot K,
	\]
	where $\Omega_U \subseteq U(\RR)$ and $\Omega_M \subseteq M(\RR)^+$ are compact
	subsets and $t > 0$ is a real number. From the description of the simple
	roots above, we know that
	\[
		A_t = \menge{s(\lambda_1, \dotsc, \lambda_r)}{\text{$\lambda_1/\lambda_2 \geq
		t$, \dots, $\lambda_{r-1}/\lambda_r \geq t$, and $\lambda_r \geq t$}}.
	\]
	It will be convenient to write elements of $\Gtl(\RR)$ as matrices with respect to
	our fixed orthonormal basis $e_1, \dotsc, e_n \in \HR$. With this convention, the
	set $A_t$ consists of all diagonal matrices of the form
	\[
		\diag(\lambda_1, \dotsc, \lambda_r, 1, \dotsc, 1, \lambda_r^{-1}, \dotsc,
		\lambda_1^{-1})
	\]
	with $\lambda_1/\lambda_2 \geq t$, \dots, $\lambda_{r-1}/\lambda_r \geq t$ and
	$\lambda_r \geq t$. The crucial point is that every such matrix belongs to
	$\Atl_t \subseteq \Stl(\RR)$, because the number of $1$'s in the middle is $n-2r
	\geq 1$. Consider an arbitrary element
	\[
		u \cdot m \cdot a \cdot k \in \Omega_U \cdot \Omega_M \cdot A_t \cdot K.
	\]
	As a matrix, $u$ is upper triangular with all diagonal elements equal to $1$, and
	$m$ is block-diagonal, with the first $r$ and last $r$ diagonal entries equal to
	$1$; in particular, we have $ma = am$.  Since $m \in \Omega_M$ varies in a compact
	set, the components of the polar decomposition
	\[
		m = \ptl_m \cdot \ktl_m \in \Ptl(\RR) \cdot \Ktl
	\]
	also belong to compact subsets of $\Ptl(\RR)$ and $\Ktl$; moreover, $\ptl_m$ is again
	block-diagonal, and therefore $a \ptl_m = \ptl_m a$. This gives
	\[
		u \cdot m \cdot a \cdot k = u \ptl_m \cdot a \cdot \ktl_m k 
		\in \Ptl(\RR) \cdot \Atl_t \cdot \Ktl.
	\]
	The first factor lies in a compact subset of $\Ptl(\RR)$, and we have already noted
	that $a \in \Atl_t$; consequently, our Siegel set is contained in a Siegel set in
	$\Gtl(\RR)$.

	The split case $n = 2r$ is less straightforward. Here the subgroup $M$ is
	trivial, and therefore our Siegel set takes the form
	\[
		\Sieg(\Omega, t) = \Omega \cdot A_t \cdot K \subseteq G(\RR),
	\]
	where $\Omega \subseteq U(\RR)$ is compact. The simple roots for the action of
	$S$ on the Lie algebra of $P_{\RR}$ are now $\lambda_1/\lambda_2, \dotsc,
	\lambda_{r-1}/\lambda_r$, and $\lambda_{r-1} \lambda_r$; with respect to our
	orthonormal basis $e_1, \dotsc, e_{2r} \in \HR$, the set $A_t$ consists of
	all diagonal matrices of the form
	\[
		\begin{pmatrix}
			\lambda_1 & & & \\
			& \ddots & & \\
			& & \lambda_r \\
			& & & \lambda_r^{-1} & & & \\
			& & & & \ddots & & \\
			& & & & & \lambda_1^{-1}
		\end{pmatrix}
	\]
	with $\lambda_1/\lambda_2 \geq t$, \dots, $\lambda_{r-1}/\lambda_r \geq t$, and
	$\lambda_{r-1} \lambda_r \geq t$. As long as $\lambda_r \geq 1$, this matrix
	belongs to $\Atl_{\min(1,t)}$; but if $\lambda_r \leq 1$, this only holds after we
	swap $\lambda_r$ and $\lambda_r^{-1}$, which amounts to conjugating by the
	permutation matrix
	\[
		\sigma = \begin{pmatrix}
			1 & & & \\
			& \ddots & & \\
			& & 0 & 1 \\
			& & 1 & 0 & & & \\
			& & & & \ddots & & \\
			& & & & & 1 
		\end{pmatrix} \in \Ktl.
	\]
	In a nutshell, this is the reason why we need \emph{two} translates of a Siegel
	set. Getting down to the details, consider again an arbitrary element
	\[
		u \cdot a \cdot k \in \Omega \cdot A_t \cdot K.
	\]
	If $a \in A_t$ is such that $\lambda_r \geq 1$, then $a \in \Atl_{\min(1,t)}$, and
	we can argue as before to show that this part of $\Sieg(\Omega, t)$ is contained in a
	Siegel set in $\Gtl(\RR)$. Let us therefore suppose that $\lambda_r \leq 1$. We
	can rewrite our element in the form
	\[
		u \cdot a \cdot k 
		= \sigma \cdot u^{\sigma} \cdot a^{\sigma} \cdot \sigma k
		\in \sigma \cdot u^{\sigma} \cdot \Atl_{\min(1,t)} \cdot \Ktl,
	\]
	where $a^{\sigma} = \sigma a \sigma$ etc. Now the crucial point is that
	$u^{\sigma} \in \Utl(\RR)$, which puts this part of $\Sieg(\Omega, t)$ into the
	translate by $\sigma$ of a Siegel set in $\Gtl(\RR)$. 

	Here is the reason why $u^{\sigma} = \sigma u \sigma \in \Utl(\RR)$.
	The matrix for $u$ is upper triangular with all diagonal
	entries equal to $1$; in particular, there is some $x \in \RR$ such that
	\[
		u e_{r+1} \equiv e_{r+1} + x e_r \mod \langle e_1, \dotsc, e_{r-1} \rangle.
	\]
	Because of the special shape of $\sigma$, having $u^{\sigma} \in \Utl(\RR)$ is
	now equivalent to
	\[
		x = \inner{u e_{r+1}}{e_r}_C = Q(u e_{r+1}, C e_r) = Q(u e_{r+1}, e_{r+1}) = 0.
	\]
	As $u \in G(\RR)$, we have $Q(u e_{r+1}, e_{r+1}) = Q(u^{-1} e_{r+1},
	e_{r+1})$, and therefore
	\[
		\inner{u e_{r+1}}{e_r}_C = \inner{u^{-1} e_{r+1}}{e_r}_C.
	\]
	From the relation $u e_{r+1} \equiv e_{r+1} + x e_r$, we deduce that
	\[
		u^{-1} e_{r+1} \equiv e_{r+1} - x e_r 
			\mod \langle e_1, \dotsc, e_{r-1} \rangle,
	\]
	and after taking the inner product with $e_r$, we get $x = -x$ or $x=0$. 

	It remains to argue that the translate is actually by an element of $G(\QQ)$; note
	that $\sigma \in \Ktl$ is not rational in general. To that end, we define an
	involution $g \in G(\QQ)$ by requiring that $g v_i = v_i$ and $g
	v_i' = v_i'$ for $i = 1, \dotsc, r-1$, and $g v_r = v_r'$ and $g v_r' = v_r$. Then
	it is easy to check that the matrix for $g \sigma$ in the basis $e_1, \dotsc,
	e_{2r}$ is upper triangular, which means that $g \sigma \in \Ptl(\RR)$. Using
	\cite[\S\,12.4]{Borel:IntroductionArithmeticGroups}, it follows that the translate by $\sigma$ of a Siegel set in
	$\Gtl(\RR)$ with respect to $\Ktl$ and $\Ptl$ is contained in the translate by $g$ of a
	bigger Siegel set in $\Gtl(\RR)$, which is enough for our purposes.
\end{proof}

\newpar
Let us now relate Siegel sets in $G(\RR)$ to reduction theory for quadratic forms.
Following \cite[Section~I.2]{Klingen}, we shall say that a positive definite inner product
$\inner{\argbl}{\argbl}$ on the vector space $\HR$ is \define{$t$-reduced} relative
to an ordered basis $v_1, \dotsc, v_n \in \HQ$ (where $t > 0$ is a real number) if the
following three conditions hold:
\begin{aenumerate}
\item For every $1 \leq i \leq n-1$, one has $\norm{v_i}^2 \leq t \norm{v_{i+1}}^2$.
		\item For every $1 \leq i < j \leq n$, one has $2 \abs{\inner{v_i}{v_j}} \leq t
		\norm{v_i}^2$.
	\item The matrix of the quadratic form satisfies the inequality
		\[
			\prod_{i=1}^n \norm{v_i}^2 \leq t \cdot c_1(n) \det \bigl( \inner{v_i}{v_j}
			\bigr)_{i,j},
		\]
		where $c_1(n)$ the optimal constant in Minkowski's inequality.
\end{aenumerate}
For a given basis $v_1, \dotsc, v_n \in \HQ$ and a given number $t > 0$, consider the
set of elements $g \in \GL(\HR)$ such that the inner product
\[
	(v, w) \mapsto \inner{g^{-1} v}{g^{-1} w}_C = Q(g^{-1} v, C g^{-1} w)
\]
is $t$-reduced relative to the basis $v_1, \dotsc, v_n$. It is known that every
Siegel set in $GL(\HR)$ for the maximal compact subgroup $\Ktl$ is contained in a
set of this type; conversely, every set of this type is contained in a Siegel set
(for $\Ktl$).

\newpar
To simplify the discussion, let us denote by
\[
	\Sieg(v_1, \dotsc, v_n, t)
\]
the set of elements $g \in G(\RR)$ such that the inner product
\[
	(v, w) \mapsto \inner{v}{w}_{g C g^{-1}} = Q(v, g C g^{-1} w)
\]
is $t$-reduced relative to a given basis $v_1, \dotsc, v_n \in \HQ$. Being defined by a
collection of inequalities, this is clearly a subalgebraic subset of $G(\RR)$. It is
easy to see that
\[
	g \Sieg(v_1, \dotsc, v_n, t) = \Sieg(g v_1, \dotsc, g v_n, t)
\]
for any $g \in G(\QQ)$, which means that the collection of these sets is stable under
translation by elements in $G(\QQ)$. The following theorem gives a useful
criterion for checking whether a given subset of $G(\RR)$ is contained in finitely
many $G(\QQ)$-translates of a Siegel set for $K$.

\begin{theorem} \label{thm:Siegel-reduction}
	Any Siegel set in $G(\RR)$ for the maximal compact subgroup $K$ is contained in
	a finite union of sets of the form $\Sieg(v_1, \dotsc, v_n, t)$; conversely, any
	set of the form $\Sieg(v_1, \dotsc, v_n, t)$ is contained in a finite union of
	Siegel sets $($for $K)$.
\end{theorem}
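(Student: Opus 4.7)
The strategy is to reduce both directions to the classical reduction theory for $\Gtl(\RR) = \GL(\HR)$ recalled just before the statement, using \cref{prop:Siegel} to pass between Siegel sets of $G$ and of $\Gtl$. The bridge is the identity
\[
	\inner{h^{-1}v}{h^{-1}w}_C = Q(h^{-1}v, C h^{-1}w) = Q(v, hCh^{-1}w) = \inner{v}{w}_{hCh^{-1}},
\]
valid for every $h \in G(\RR)$ by $G$-invariance of $Q$. It exhibits $\Sieg(v_1,\dots,v_n,t)$ as $G(\RR) \cap \widetilde{\Sieg}(v_1,\dots,v_n,t)$, where $\widetilde{\Sieg}(v_1,\dots,v_n,t) \subseteq \Gtl(\RR)$ denotes the analogous reduction set in the ambient group (the set of $h \in \Gtl(\RR)$ such that $(v,w) \mapsto \inner{h^{-1}v}{h^{-1}w}_C$ is $t$-reduced relative to the given basis). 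I will also repeatedly use the observation, established earlier in this section, that any $G(\QQ)$-translate of a Siegel set for $K$ is itself contained in a Siegel set for $K$ (with respect to a possibly different minimal parabolic).

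For the forward direction, let $\Sieg$ be a Siegel set in $G(\RR)$ for $K$. By \cref{prop:Siegel} there exist $g_1, g_2 \in G(\QQ)$ and Siegel sets $\widetilde{\Sieg}_1, \widetilde{\Sieg}_2$ in $\Gtl(\RR)$ for $\Ktl$ with $\Sieg \subseteq g_1 \widetilde{\Sieg}_1 \cup g_2 \widetilde{\Sieg}_2$. Classical reduction theory then places each $\widetilde{\Sieg}_i$ inside $\widetilde{\Sieg}(v_1^{(i)},\dots,v_n^{(i)}, t_i)$ for suitable bases and $t_i > 0$. Translating by $g_i \in G(\QQ)$ and substituting $h \mapsto g_i^{-1} h$ merely relabels the basis, so $g_i \widetilde{\Sieg}_i \subseteq \widetilde{\Sieg}(g_i v_1^{(i)},\dots, g_i v_n^{(i)}, t_i)$; intersecting with $G(\RR)$ via the bridging identity yields the containment
\[
	\Sieg \subseteq \Sieg(g_1 v_1^{(1)},\dots, g_1 v_n^{(1)}, t_1) \cup \Sieg(g_2 v_1^{(2)},\dots, g_2 v_n^{(2)}, t_2).
\]

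For the converse, begin with $\Sieg(v_1,\dots,v_n,t) = G(\RR) \cap \widetilde{\Sieg}(v_1,\dots,v_n,t)$ and apply classical reduction theory in $\Gtl$ to contain $\widetilde{\Sieg}(v_1,\dots,v_n,t)$ in some Siegel set $\widetilde{\Sieg}_0 \subseteq \Gtl(\RR)$ for $\Ktl$. The remaining, and main, obstacle is to show that $G(\RR) \cap \widetilde{\Sieg}_0$ is contained in a finite union of $G(\QQ)$-translates of Siegel sets in $G(\RR)$ for $K$, a statement converse in direction to \cref{prop:Siegel}. This can be extracted from the general Siegel property for reductive subgroups (cf.~\cite{Orr}), or argued directly by intersecting the Iwasawa-type decomposition $\Gtl(\RR) = \Utl(\RR)\,\Stl(\RR)\,\Ktl$ with $G(\RR)$ and exploiting the explicit relations $\Utl \cap G = U$ and $\Stl \cap G(\RR) = S$ recorded earlier to recognize the intersection as a controlled union of sets of the form $\Omega \cdot A_t \cdot K$. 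Combined with the observation that $G(\QQ)$-translates of Siegel sets for $K$ absorb back into Siegel sets for $K$, this finishes the proof.
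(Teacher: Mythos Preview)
Your forward direction is correct and matches the paper's argument: \cref{prop:Siegel} plus the classical correspondence between Siegel sets in $\GL(\HR)$ and reduction sets, together with the translation formula, gives the containment in finitely many sets $\Sieg(v_1,\dots,v_n,t)$.

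For the converse, your first option---appealing to the Siegel property for the reductive inclusion $G \subseteq \Gtl$---is the right idea, and is exactly what the paper does: it is packaged as \cref{prop:BHC}, whose hypothesis (that Siegel sets in the subgroup sit in finitely many rational translates of Siegel sets in the ambient group) is supplied precisely by \cref{prop:Siegel}. The argument is pure reduction theory: cover $G(\RR)$ by $\Gamma_G$-translates of a Siegel set $\Sieg_G$, and use the Siegel property in $\Gtl(\RR)$ to see that only finitely many translates can meet the given $\widetilde{\Sieg}_0$.

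Your second option, however, does not work as sketched. Writing $g \in G(\RR) \cap \widetilde{\Sieg}_0$ in the Iwasawa form $g = \tilde{u}\,\tilde{a}\,\tilde{k}$ for $\Gtl(\RR)$, there is no reason for the individual factors $\tilde{u}, \tilde{a}, \tilde{k}$ to lie in $G(\RR)$, so the relations $\Utl \cap G = U$ and $\Stl \cap G(\RR) = S$ give you nothing. The dimension mismatch already signals the problem: $\dim \Stl = n$ while $\dim S = r = \rk G$, so the $\Gtl$-torus component $\tilde{a}$ of a generic $g \in G(\RR)$ does not land in $S(\RR)$. Recognizing $g$ as lying in a controlled union of sets $\Omega \cdot A_t \cdot K$ from its $\Gtl$-Iwasawa data is essentially as hard as the theorem itself; this is why the paper routes through the Siegel property rather than attempting a direct comparison of decompositions.
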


\begin{proof}
	The first assertion follows immediately from \cref{prop:Siegel} and the
	discussion above. To prove the second assertion, observe that any set of the form
	$\Sieg(v_1, \dotsc, v_n, t)$ is contained in a Siegel set in $\GL(\HR)$ (for
	$\Ktl$); moreover, the intersection of such a Siegel set with the subgroup
	$G(\RR)$ is contained in finitely many $G(\QQ)$-translates of a Siegel set in
	$G(\RR)$ by \cref{prop:BHC} below. We then get the desired result by
	recalling that any $G(\QQ)$-translate of a Siegel set for $K$ is again a 
	Siegel set for $K$ (with a possibly different minimal parabolic $\QQ$-subgroup).
\end{proof}

\newpar
In this section, we prove a proposition concerning the intersection of a Siegel set with
a reductive subgroup. The result is similar to \cite[Lemma~7.5]{BHC}, except that we
are working with Siegel sets for $\QQ$-algebraic groups (instead of with Siegel
domains for $\RR$-algebraic groups), and that we are making a different set of
assumptions about the subgroup. To simplify the notation, let $G$ be an
arbitrary reductive $\QQ$-algebraic group, and $H \subseteq G$ a reductive
$\QQ$-algebraic subgroup. Further, let $K_G \subseteq G(\RR)$ and $K_H \subseteq
H(\RR)$ be maximal compact subgroups such that $K_H = K_G \cap H(\RR)$.
Note that the Cartan involutions on $G_{\RR}$ and $H_{\RR}$ are not necessarily
compatible with each other.

\begin{proposition} \label{prop:BHC}
	Suppose that every Siegel set in $H(\RR)$ for the maximal compact subgroup $K_H$
	is contained in finitely many $G(\QQ)$-translates of a Siegel set in $G(\RR)$ (for
	$K_G$). Let $\Sieg_G \subseteq G(\RR)$ be any Siegel set for $K_G$. Then there is a
	Siegel set $\Sieg_H \subseteq H(\RR)$ for $K_H$, and a finite set $F \subseteq
	H(\QQ)$, such that
	\[
		\Sieg_G \cap H(\RR) \subseteq F \cdot \Sieg_H.
	\]
\end{proposition}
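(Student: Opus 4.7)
\emph{Plan.} The strategy combines the given hypothesis with the classical finiteness theorem of reduction theory in $G$. First, fix compatible arithmetic subgroups: choose a $\QQ$-rational integral structure on $G$ and set $\Gamma_G = G(\ZZ)$, $\Gamma_H = H(\QQ) \cap G(\ZZ)$. Then $\Gamma_H \subseteq \Gamma_G$, and $\Gamma_H$ is arithmetic in $H$. By the standard reduction theorem for reductive $\QQ$-algebraic groups, one can choose a Siegel set $\Sieg_H \subseteq H(\RR)$ for $K_H$ large enough that $\Gamma_H \cdot \Sieg_H = H(\RR)$.

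Since $\Sieg_G \cap H(\RR) \subseteq H(\RR) = \Gamma_H \Sieg_H$, every element of $\Sieg_G \cap H(\RR)$ has the form $\gamma s$ with $\gamma \in \Gamma_H$ and $s \in \Sieg_H$. Setting
\[
	F \defeq \menge{\gamma \in \Gamma_H}{\gamma \Sieg_H \cap \Sieg_G \neq \varnothing},
\]
one obtains $\Sieg_G \cap H(\RR) \subseteq F \cdot \Sieg_H$ with $F \subseteq H(\QQ)$. The entire content of the proposition therefore reduces to showing that $F$ is finite.

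For the finiteness, invoke the hypothesis to write $\Sieg_H \subseteq \bigcup_{i=1}^{N} g_i \Sieg'_G$ with $g_i \in G(\QQ)$ and $\Sieg'_G$ a Siegel set for $K_G$ in $G(\RR)$. By the earlier discussion in this section (applied to each $g_i \in G(\QQ)$), each translate $g_i \Sieg'_G$ is itself contained in some Siegel set $\Sieg_i \subseteq G(\RR)$ for $K_G$, in general with respect to a different minimal parabolic $\QQ$-subgroup of $G$. If $\gamma \in F$, then $\gamma g_i \Sieg'_G \cap \Sieg_G \neq \varnothing$ for some $i$, and hence $\gamma \Sieg_i \cap \Sieg_G \neq \varnothing$. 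The classical reduction-theoretic finiteness theorem asserts that for any arithmetic subgroup $\Gamma_G$ and any two Siegel sets for $K_G$ in $G(\RR)$, only finitely many elements of $\Gamma_G$ move one into overlap with the other; applying this to each pair $(\Sieg_i, \Sieg_G)$ and using $\Gamma_H \subseteq \Gamma_G$ gives the finiteness of each set $\menge{\gamma \in \Gamma_H}{\gamma g_i \Sieg'_G \cap \Sieg_G \neq \varnothing}$, and their finite union contains $F$. The main obstacle is exactly the need to allow the $\Sieg_i$ to be taken with respect to minimal parabolic $\QQ$-subgroups different from the one defining $\Sieg_G$; this is why the classical finiteness theorem must be formulated for arbitrary Siegel sets for a fixed maximal compact subgroup, rather than for a fixed minimal parabolic.
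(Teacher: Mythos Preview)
Your argument follows the same strategy as the paper's: choose compatible arithmetic subgroups $\Gamma_H \subseteq \Gamma_G$, cover $H(\RR)$ by $\Gamma_H$-translates of a Siegel set $\Sieg_H$, and then deduce finiteness of $F$ from the hypothesis together with the Siegel property in $G$. There is one genuine slip: the claim that one can choose $\Sieg_H$ with $\Gamma_H \cdot \Sieg_H = H(\RR)$ is false in general, since $\Gamma_H$ may have several cusps. Reduction theory only gives $H(\RR) = \Gamma_H \cdot C_H \cdot \Sieg_H$ for some finite $C_H \subseteq H(\QQ)$ \cite[Th\'eor\`eme~15.5]{Borel:IntroductionArithmeticGroups}, and this is exactly what the paper uses. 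The fix in your framework is painless---simply absorb $C_H$ into $F$---but as written the step does not go through. The only other difference is cosmetic: the paper enlarges $\Sieg_G$ once so that $\Sieg_H \subseteq A \cdot \Sieg_G$ for a finite $A \subseteq G(\QQ)$ and then invokes the Siegel property for $\Sieg_G$ against itself, whereas you keep the translated Siegel sets $\Sieg_i$ (for varying minimal parabolics) separate and apply the Siegel property pairwise; these two maneuvers are equivalent.
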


\begin{proof}
	This is an easy consequence of reduction theory, and all that is required is
	collecting some results from \cite{Borel:IntroductionArithmeticGroups}. Let $\Gamma_G \subseteq G(\QQ)$ be an
	arithmetic subgroup; the intersection $\Gamma_H = \Gamma_G \cap H(\QQ)$ is then an
	arithmetic subgroup of $H(\QQ)$. According to \cite[Theorem~15.5]{Borel:IntroductionArithmeticGroups}, there
	exists a Siegel set $\Sieg_H \subseteq H(\RR)$ for the maximal compact subgroup
	$K_H$, and a finite set $C_H \subseteq H(\QQ)$, such that
	\[
		H(\RR) = \Gamma_H \cdot C_H \cdot \Sieg_H.
	\]
	Since the intersection $\Sieg_G \cap H(\RR)$ is of course contained in $\Gamma_H
	C \Sieg_H$, it is therefore enough to prove finiteness of the set
	\[
		B = \menge{\gamma \in \Gamma_H}{\text{$\Sieg_G$ intersects $\gamma C_H
		\Sieg_H$}}.
	\]
	After enlarging $\Sieg_G$, if necessary, our assumption about Siegel sets in
	$H(\RR)$ implies that there is a finite set $A \subseteq G(\QQ)$ such that $\Sieg_H
	\subseteq A \Sieg_G$. Consequently, our set $B \subseteq \Gamma_H$ is
	contained in the larger set
	\[
		\menge{\gamma \in \Gamma_G}{\text{$\Sieg_G$ intersects $\gamma C_H A \Sieg_G$}},
	\]
	which is finite because $\Sieg_G$ has the Siegel property \cite[Theorem~15.4]{Borel:IntroductionArithmeticGroups}.

	It remains to justify our claim that $\Sieg_H \subseteq A \Sieg_G$. By assumption,
	$\Sieg_H$ is contained in many $G(\QQ)$-translates of a Siegel set in $G(\RR)$,
	but probably with respect to a different minimal parabolic $\QQ$-subgroup. After
	translation by an element of $G(\QQ)$, we can assume that the minimal parabolic
	$\QQ$-subgroup is the same as for $\Sieg_G$; and then we can enlarge $\Sieg_G$ and
	assume that the Siegel set in question is actually $\Sieg_G$. This completes the proof.
\end{proof}

\newpar
Let us return to the setting considered in \cref{par:Hodge-G}, but add one
additional piece of data. We still assume that $G = O(\HQ, Q)$, and that we have a
fixed Weil operator $C \in \End(\HR)$ for which $(v,w) \mapsto \inner{v}{w}_C =
Q(v,Cw)$ is a positive definite inner product on the $\RR$-vector space $\HR$. Let us
now assume in addition that we have a nonzero element $a \in \HQ$ with the property
that $Ca = a$. In particular,
\[
	Q(a,a) = \inner{a}{a}_C > 0.
\]
The stabilizer of this vector is a reductive $\QQ$-subgroup $G_a \subseteq G$; concretely,
\[
	G_a(\QQ) = \menge{g \in G(\QQ)}{ga = a}.
\]
We are interested in Weil operators $C' \in \End(\HR)$ with the property that $C' a =
a$. The following result says that all such Weil operators are conjugate to $C$ by
elements of the real group $G_a(\RR)$.

\begin{lemma} \label{lem:orbit-Ga}
	Let $C' \in \End(\HR)$ be a Weil operator for $(\HQ, Q)$. If $C' a = a$, then
	there is an element $g \in G_a(\RR)$ such that $C' = g C g^{-1}$.
\end{lemma}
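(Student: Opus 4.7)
The plan is to start from \cref{lem:Weil}, which tells us that $G(\RR)/K$ parametrizes Weil operators for $(\HQ, Q)$. This gives some $h \in G(\RR)$ with $C' = hCh^{-1}$. Since right-multiplication of $h$ by any element of $K$ leaves the conjugate $hCh^{-1}$ unchanged, the proof reduces to finding $k \in K$ so that $g = hk$ additionally stabilizes $a$; equivalently, so that $ka = h^{-1} a$.

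Write $b = h^{-1} a$. Applying $C' = hCh^{-1}$ to the equation $C'a = a$ yields $Cb = b$, and $Q(b,b) = Q(a,a)$ because $h \in G(\RR)$ preserves $Q$. Hence both $a$ and $b$ lie in the $+1$-eigenspace $E_1(C) = \ker(C - \id)$ and have the same value of $Q$. Next I would observe that $Q$ restricts to a positive definite inner product on $E_1(C)$: for $v \in E_1(C)$ one has $Q(v,v) = Q(v, Cv) = \inner{v}{v}_C$. A parallel calculation shows that $Q$ is negative definite on $E_{-1}(C)$, and the relation $Q(Cv, Cw) = Q(v,w)$ (which follows from $C \in G(\RR)$) together with $C^2 = \id$ shows that $E_1(C)$ and $E_{-1}(C)$ are $Q$-orthogonal.

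The remaining step is then a standard transitivity statement. Since $a$ and $b$ are two vectors of equal positive length in the Euclidean space $\bigl( E_1(C), Q|_{E_1(C)} \bigr)$, some $k_1 \in O \bigl( E_1(C), Q|_{E_1(C)} \bigr)$ satisfies $k_1 a = b$. Extending by the identity on $E_{-1}(C)$ produces an element $k \in G(\RR)$ that preserves the decomposition $\HR = E_1(C) \oplus E_{-1}(C)$, hence commutes with $C$; so $k \in K$ and $ka = b$. Taking $g = hk$ then gives $gCg^{-1} = hCh^{-1} = C'$ and $ga = hka = hb = a$, so $g \in G_a(\RR)$, as required. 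There is no serious obstacle here—the argument is essentially the fact that $O(n)$ acts transitively on spheres—provided the compatibility between $C$ and $Q$ on the $\pm 1$-eigenspaces is set up carefully.
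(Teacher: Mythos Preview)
Your argument is correct. The key observations---that $b = h^{-1}a$ lies in $E_1(C)$ with $Q(b,b) = Q(a,a)$, that $Q$ restricts to a positive definite form on $E_1(C)$, and that the eigenspaces are $Q$-orthogonal---are all sound, and the extension of $k_1$ by the identity on $E_{-1}(C)$ does land in $K$.

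The paper's proof follows the same underlying idea but packages it differently: rather than first invoking \cref{lem:Weil} to obtain an arbitrary $h$ and then correcting by an element of $K$, it constructs $g$ in one step by choosing orthonormal bases $e_1,\dots,e_n$ and $e_1',\dots,e_n'$ adapted to $C$ and $C'$ respectively, with $e_1 = a/\norm{a}_C$ and $e_1' = a/\norm{a}_{C'}$, and letting $g$ send $e_i \mapsto e_i'$. Since $\norm{a}_C = \norm{a}_{C'}$, this $g$ fixes $a$ automatically. Your two-step approach (find any conjugator, then adjust within $K$) is arguably more modular and makes explicit that the obstruction lives in a single $K$-orbit question on the sphere in $E_1(C)$; the paper's direct construction is essentially the same transitivity argument with the adjustment built in from the start.
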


\begin{proof}
	The proof is similar to that of \cref{lem:Weil}. We have
	\[
		\norm{a}_{C'}^2 = Q(a,a) = \norm{a}_C^2,
	\]
	because $C'a = a = Ca$. Let $n = \dim \HR$. Since $Q$ has a fixed signature, we have
	\[
		\dim E_1(C') = \dim E_1(C) = p \quad \text{and} \quad
		\dim E_{-1}(C') = \dim E_{-1}(C) = n-p.
	\]
	The unit vector $e_1 = a / \norm{a}_C$ can be completed to a basis
	$e_1, \dotsc, e_n \in \HR$ that is orthonormal for the inner product
	$\inner{\argbl}{\argbl}_C$, in such a way that $e_1, \dotsc, e_p \in E_1(C)$ and
	$e_{p+1}, \dotsc, e_n \in E_{-1}(C)$. Choose a second basis $e_1', \dotsc, e_n'
	\in \HR$ with $e_1' = a / \norm{a}_{C'}$ that is similarly adapted to
	$\inner{\argbl}{\argbl}_{C'}$ and $C'$, and let $g \in \Aut(\HR)$ be the unique
	automorphism such that $g e_i = e_i'$ for $i = 1, \dotsc, n$. Then obviously $ga =
	a$. As in the proof of \cref{lem:Weil}, one shows that $C' = g C g^{-1}$ and
	$g \in G(\RR)$, which then implies that $g \in G_a(\RR)$ because $ga = a$.
\end{proof}

\newpar
The orthogonal complement of $a$ relative to $Q$ is the subspace
\[
	\HQ' = \menge{v \in \HQ}{Q(a,v) = 0} = \menge{v \in \HQ}{\inner{a}{v}_C = 0}.
\]
Evidently, $\HQ = \QQ a \oplus \HQ'$. It is also easy to see that $C(\HR') \subseteq
\HR'$; consequently, the restriction of $C$ to $\HQ'$ is a Weil operator for the pair
$(\HQ', Q)$. We denote by 
\[
	K_a = \menge{g \in G_a(\RR)}{gC = Cg}
\]
the resulting maximal compact subgroup; note that $K_a = K \cap G_a(\RR)$.

\begin{proposition} \label{prop:Siegel-Ga}
	Any Siegel set in $G_a(\RR)$ for the maximal compact subgroup $K_a$ is contained
	in finitely many $G(\QQ)$-translates of a Siegel set in $G(\RR)$ $($for $K)$.
\end{proposition}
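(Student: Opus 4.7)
The plan is to compare Siegel sets in $G_a(\RR)$ and in $G(\RR)$ via the reduction-theoretic characterization from \cref{thm:Siegel-reduction}. Because $Q(a,a) > 0$, the restriction $Q|_{\HQ'}$ is a nondegenerate symmetric bilinear form; and because any Weil operator that fixes $a$ also preserves $a^\perp$ (as it preserves $Q$), the restriction $C|_{\HR'}$ is a Weil operator for $(\HQ', Q|_{\HQ'})$. Under the orthogonal decomposition $\HQ = \QQ a \oplus \HQ'$, the group $G_a$ is canonically identified with $O(\HQ', Q|_{\HQ'})$, and $K_a$ is its associated maximal compact subgroup. Applying \cref{thm:Siegel-reduction} inside $G_a(\RR)$, any Siegel set for $K_a$ is contained in a finite union of sets of the form $\Sieg(v_1', \dotsc, v_{n-1}', t)$, defined by reduction conditions on some rational basis $v_1', \dotsc, v_{n-1}'$ of $\HQ'$. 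It therefore suffices to exhibit each such set inside finitely many $G(\QQ)$-translates of a Siegel set in $G(\RR)$ for $K$.

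The strategy is to insert $a$ into the basis at the position indicated by its length and to verify that the enlarged basis is $t'$-reduced for $\inner{\argbl}{\argbl}_{gCg^{-1}}$ on $\HR$, with $t'$ depending only on $t$ and $Q(a,a)$. The key observation is that for any $g \in G_a(\RR)$ the conjugate $gCg^{-1}$ also fixes $a$, hence preserves $\HR'$; in particular $a$ is orthogonal to $\HR'$ with respect to $\inner{\argbl}{\argbl}_{gCg^{-1}}$, and $\norm{a}^2 = Q(a,a)$ is independent of $g$. We partition $\Sieg(v_1', \dotsc, v_{n-1}', t)$ into at most $n$ pieces according to the index $0 \leq i \leq n-1$ at which $Q(a,a)$ sits in the length sequence $\norm{v_1'}^2 \leq \dotsb \leq \norm{v_{n-1}'}^2$, and on the piece where $\norm{v_i'}^2 \leq Q(a,a) \leq \norm{v_{i+1}'}^2$ we work with the enlarged basis $v_1', \dotsc, v_i', a, v_{i+1}', \dotsc, v_{n-1}'$ of $\HQ$.

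On each piece we verify the three conditions for $t'$-reducedness: condition (a) on consecutive length ratios is inherited from the $t$-reducedness on $\HR'$ together with the defining inequalities of the piece; condition (b) on off-diagonal pairings involving $a$ is automatic since $\inner{a}{v_j'}_{gCg^{-1}} = 0$; and condition (c) on the Gram determinant transports from $\HR'$ to $\HR$ because the Gram matrix of the enlarged basis is block diagonal, so both $\prod_i \norm{v_i}^2$ and $\det \bigl(\inner{v_i}{v_j}\bigr)$ pick up the same factor $Q(a,a)$, after which only the ratio $c_1(n)/c_1(n-1)$ of Minkowski constants needs to be absorbed into $t'$. A final invocation of \cref{thm:Siegel-reduction} in the opposite direction then places the enlarged reduced set inside finitely many $G(\QQ)$-translates of a Siegel set in $G(\RR)$ for $K$, as required.

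The main technical point is the determinant inequality (c), where the dimension jumps from $n-1$ to $n$; the orthogonal splitting $\HR = \RR a \oplus \HR'$ under $\inner{\argbl}{\argbl}_{gCg^{-1}}$, which holds precisely because $g \in G_a(\RR)$, makes the Gram matrix block diagonal and reduces this point to the adjustment of a single dimensional constant.
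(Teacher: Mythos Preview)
Your proposal is correct and follows essentially the same route as the paper: reduce via \cref{thm:Siegel-reduction} to the concrete problem of inserting $a$ into a $t$-reduced basis of $\HQ'$, use that $gCg^{-1}a=a$ to get $\inner{a}{v_j'}_{gCg^{-1}}=0$ and $\norm{a}^2=Q(a,a)$, and then check the three reducedness conditions. The only difference is cosmetic: the paper asserts that condition (c) is ``trivially satisfied'' with the same $t$ (implicitly using that both sides scale by $Q(a,a)$ and that the Minkowski constant does not decrease with the dimension), whereas you allow an adjusted constant $t'$ to absorb the ratio $c_1(n)/c_1(n-1)$, which is harmless and arguably more careful; also note that your phrase ``the length sequence $\norm{v_1'}^2\leq\dotsb\leq\norm{v_{n-1}'}^2$'' is slightly misleading since $t$-reducedness does not force monotonicity, but the insertion position you actually use always exists regardless.
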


\begin{proof}
	The criterion in \cref{thm:Siegel-reduction} reduces the problem to the
	following concrete statement: suppose that $g \in G_a(\RR)$ is an element with the
	property that
	\[
		(v,w) \mapsto \inner{v}{w}_{gCg^{-1}} = Q(v, gCg^{-1} w)
	\]
	is $t$-reduced relative to an ordered basis $v_1, \dotsc, v_{n-1} \in \HQ'$; then it
	is possible to add the vector $a$ to the basis (in one of the $n$ possible places)
	and still keep the inner product $t$-reduced. This is completely elementary. To
	simplify the notation, let us agree to write $\inner{v}{w}$ and
	$\norm{v}$ instead of $\inner{v}{w}_{gCg^{-1}}$ and $\norm{v}_{gCg^{-1}}$. Without
	loss of generality, we may assume that $t \geq 1$. Recall that $\norm{a}^2 =
	Q(a,a)$. Since 
	\[
		\inner{v_i}{a} = Q(v_i, g C g^{-1} a) = Q(v_i, a) = 0,
	\]
	the second and third condition in the definition are trivially satisfied. For the
	first condition, note that for every $i = 1, \dotsc, n-1$, at least one of the
	inequalities
	\[
		\norm{a}^2 \leq t \norm{v_i}^2 \quad \text{or} \quad
		\norm{v_i}^2 \leq t \norm{a}^2
	\]
	will be true (because $t \geq 1$). Consequently, there is some value of $i \in
	\{1, \dotsc, n-1\}$ with the property that
	\[
		\norm{v_i}^2 \leq t \norm{a}^2 \quad \text{and} \quad
		\norm{a}^2 \leq t \norm{v_{i+1}}^2.
	\]
	But this is saying exactly that our inner product is $t$-reduced relative to the
	ordered basis
	\[v_1, \ldots, v_i, a, v_{i+1}, \ldots, v_{n-1} \in \HQ.\]
\end{proof}

\section{Definable structures on flat bundles}

\newpar

Let $X$ be a nonsingular complex algebraic variety, and let $\shE$ be a locally free
$\OX$-module of finite rank with a flat holomorphic connection $\nabla \colon \shE
\to \OmX^1 \tensor_{\OX} \shE$. We denote by $p \colon E \to X$ the associated
holomorphic vector bundle. Recall that $E$ is actually an algebraic vector bundle;
the algebraic structure on $E$ is uniquely determined by the flat connection. Let us
briefly review the construction. Choose an embedding $X \into Y$ into a complete
nonsingular variety, such that $D = Y \setminus X$ is a simple normal crossing
divisor. Let $(\shEt, \nabla)$ be Deligne's canonical extension of the pair $(\shE,
\nabla)$; up to isomorphism, it is determined by the following two conditions:
\begin{enumerate}
\item $\shEt$ is a locally free $\OY$-module with a flat logarithmic connection
	\[
		\nabla \colon \shEt \to \OmY^1(\log D) \tensor_{\OY} \shEt,
	\]
	such that $(\shEt, \nabla) \restr{X} \cong (\shE, \nabla)$. 
\item For each irreducible component $D_j$ of the divisor $D$, the pointwise
	eigenvalues of the residue operator
	\[
		\Res_{D_j} \nabla \in \End \bigl( \shEt \restr{D_j} \bigr)
	\]
	are complex numbers whose real part is contained in the interval $[0,1)$.
\end{enumerate}

The canonical extension has the following simple description in local coordinates.
Let $U \cong \Delta^n$ be an open neighborhood of a point $y \in Y$, with local
holomorphic coordinates $t_1, \dotsc, t_n$ centered at $y$, such that the divisor $D
\cap U$ is defined by the equation $t_1 \dotsb t_k = 0$. Let $V$ be the fiber of the
vector bundle $\shEt$ at the point $y$. Then there is a unique holomorphic
trivialization
\[
	\shEt \restr{U} \cong \shO_U \tensor_{\CC} V
\]
that restricts to the identity on $V$ at the point $y$, such that the logarithmic
connection takes the form
\[
	\nabla(1 \tensor v) = \sum_{j=1}^k \frac{\dt_j}{t_j} \tensor R_j v
\]
for commuting operators $R_1, \dotsc, R_k \in \End(V)$, all of whose eigenvalues have
real part in $[0,1)$; here $R_j$ is the residue operator along $t_j = 0$. It is easy
to see that
\[
	e^{-2 \pi i \sum_{j=1}^k \log t_j R_j} (1 \tensor v)
\]
defines a multivalued flat section of $(\shE, \nabla)$ on $U \cap X$. The monodromy
transformation around the divisor $t_j = 0$ is therefore described by the operator
\[
	e^{-2 \pi i R_j} \in \GL(V).
\]
In particular, the following two conditions are equivalent:
\begin{aenumerate}
\item The eigenvalues of the local monodromy transformations around the components of
	$D$ are complex numbers of absolute value $1$.
\item For each irreducible component $D_j$ of the divisor $D$, the pointwise
	eigenvalues of the residue operators $\Res_{D_j} \nabla$ are real numbers.
\end{aenumerate}

Since $Y$ is complete, the holomorphic vector bundle $p \colon \Et \to Y$ associated
to the locally free sheaf $\shEt$ has a unique algebraic structure; the algebraic
structure on the bundle $E$ is obtained by restriction. As before, we give $Y$ and $\Et$
the structure of $\RRanexp$-definable complex manifolds extending their algebraic
structures; this induces definable complex manifold structures on $X$ and $E$.  The former is the canonical algebraic definable structure $X^\mathrm{def}$ on $X$ and we call the latter the \emph{algebraic} definable structure on $E$.  It is uniquely described as that for which any holomorphic section $s$ of $p:\tilde E\to Y$ over an open subset $U'\subset Y$ restricts to a definable map on any definable $U\subset U'\cap X$ which has compact closure in $U'$.

\newpar
The holomorphic vector bundle $E$ naturally comes equipped with another $\RRanexp$-definable structure coming from the flat coordinates which we construct as follows.  The subsheaf $\shE^\nabla\subset \shE$ of flat sections is a complex local system.  By definable triangulation, the definable topological space $X^\mathrm{def}$ admits a definable atlas $\{(U_i,\phi_i)\}$ with each $U_i$ simply connected.  Choosing a basis of flat sections $s_1,\ldots,s_r$ of $\shE^\nabla|_{U_i}$, we therefore obtain a holomorphic trivialization $\psi_i:U_i\times\C^r \xrightarrow{\cong} p^{-1}(U_i)$ via $(u,z_1,\ldots,z_r)\mapsto \sum_i z_is_i(u)$.  Moreover, on intersections $U_{ij}:=U_i\cap U_j$ the transition functions are constant:  there are $g_{ij}\in \GL_r(\C)$ such that $\psi_i=\psi_j\circ(\id\times g_{ij})$.  The \emph{flat} definable structure on $E$ is then given by the definable complex manifold atlas $\{(p^{-1}(U_i),(\phi_i\times\id)\circ\psi_i^{-1})\}$.  It is uniquely characterized by the property that any flat section $s$ of $p:E\to X$ over a definable open subset $U\subset X$ is definable.

\newpar
From now on, we assume that the local monodromy transformations around the components
of $D$ have eigenvalues of absolute value $1$; in this case we say that $E$ has norm one eigenvalues at infinity. Under this assumption, the two definable structures from the previous two paragraphs are equivalent.
\begin{proposition}[\emph{cf.}~{\cite[Theorem 1.2]{flatbun}}]Let $X$ be a nonsingular complex algebraic variety and $E$ a holomorphic flat vector bundle over $X$ with norm one eigenvalues at infinity in the above sense.  Then the flat and algebraic definable complex manifold structures on $E$ are equivalent.
\end{proposition}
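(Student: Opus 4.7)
The plan is to reduce the claim to a local computation near the boundary divisor and verify definability of the transition matrix between algebraic and flat trivializations explicitly. Away from $D$, the two structures trivially agree: $\shEt$ locally coincides with $\shE$, flat sections are genuine holomorphic sections on any small simply-connected open, and any holomorphic function is $\RRan$-definable on a relatively compact definable open. So the interesting case is a neighborhood of a point $y \in D$.

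In local coordinates $(t_1, \ldots, t_n)$ on $U \cong \Delta^n$ with $D \cap U = \{t_1 \cdots t_k = 0\}$ and the canonical trivialization $\shEt \restr{U} \cong \shO_U \otimes_{\CC} V$, I would cover $U \cap X = (\Delta^*)^k \times \Delta^{n-k}$ by open sectors
\[
	\Sigma = \{(t_1, \ldots, t_n) : \alpha_j < \arg t_j < \beta_j, \; j = 1, \ldots, k\},
\]
which are semialgebraic (hence $X^{\mathrm{def}}$-definable) and simply connected. After refining the atlas $\{(U_i, \phi_i)\}$ defining the flat structure, every $U_i$ may be assumed to lie inside such a sector. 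On $\Sigma$ a single-valued branch of $\log t_j$ gives a basis of flat sections whose coordinates in the algebraic trivialization form the columns of the matrix $M(t) = \exp\bigl(-\sum_{j=1}^k (\log t_j) R_j\bigr)$ (up to a constant invertible factor). Equivalence of the two definable structures on $p^{-1}(\Sigma)$ then reduces to showing that both $M(t)$ and $M(t)^{-1}$ are $\RRanexp$-definable as functions of $t$ on $\Sigma$.

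The norm-one hypothesis forces every eigenvalue of each residue $R_j$ to be real, so writing $R_j = D_j + N_j$ in commuting Jordan decomposition I split
\[
	e^{-(\log t_j) R_j} = e^{-(\log t_j) D_j} \cdot e^{-(\log t_j) N_j}.
\]
The nilpotent factor is polynomial in $\log t_j = \log \abs{t_j} + i \arg t_j$, hence visibly $\RRanexp$-definable on $\Sigma$. In an eigenbasis of $D_j$, the semisimple factor is diagonal with entries $\abs{t_j}^{-\lambda} \cdot e^{-i \lambda \arg t_j}$ for real $\lambda$; the factor $\abs{t_j}^{-\lambda} = e^{-\lambda \log \abs{t_j}}$ is $\RRexp$-definable, while $e^{-i \lambda \arg t_j} = \cos(\lambda \arg t_j) - i \sin(\lambda \arg t_j)$ is the restriction of a global analytic function of $\arg t_j$ to the bounded interval $(\alpha_j, \beta_j)$, hence $\RRan$-definable. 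The inverse $M(t)^{-1}$ has exactly the same shape with $-R_j$ in place of $R_j$.

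The one real obstacle is the case of irrational real eigenvalues of $R_j$: the oscillatory function $\theta \mapsto e^{-i\lambda \theta}$ is $\RRan$-definable only on bounded intervals, and this is precisely why the argument must be carried out on sectors rather than on all of $(\Delta^*)^k$. The full verification, including the gluing between sectors and across different charts in $Y$, is the content of the cited theorem in \cite{flatbun}, to which I would appeal for the complete proof.
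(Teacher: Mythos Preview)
Your proposal is correct and follows essentially the same line as the paper's own sketch: reduce to a local chart near the boundary, work on bounded angular sectors where $\log t_j$ is single-valued and $\RRanexp$-definable, and split the transition matrix via the Jordan decomposition $R_j = R_j^{ss} + R_j^{u}$, with the nilpotent part contributing a polynomial in $\log t_j$ and the semisimple part being $\RRanexp$-definable precisely because the eigenvalues are real. The paper likewise defers the full details to \cite{flatbun}; the only small refinement worth noting is that the algebraic frame differs from the canonical trivialization of $\shEt\restr{U}$ by a matrix of restricted analytic functions on $U$ (not merely a constant), but this does not affect the argument.
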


The idea of the proof can be seen from the construction of the Deligne canonical extension as above.  An algebraic frame for $\shE$ extends to an algebraic frame for the canonical extension $\tilde\shE$ on the compactification $Y$.  Thus, locally on the boundary it is related by a matrix of restricted analytic functions to a basis of sections of the form $e^{-2 \pi i \sum_{j=1}^k \log t_j R_j} (1 \tensor v)$.  This basis is in turn related to the flat basis by the matrix $e^{-2 \pi i \sum_{j=1}^k \log t_j R_j}$, which is $\RRanexp$-definable on bounded angular sectors.  Indeed, the functions $\log t_j$ are $\RRanexp$-definable on bounded angular sectors in polydisk neighborhoods of the boundary in $X^\mathrm{def}$.  If $R_j=R_j^{ss}+R_j^u$ is the Jordan decomposition, then  $e^{-2 \pi i \sum_{j=1}^k \log t_j R_j^u}$ is polynomial in the $\log t_j$, while $e^{-2 \pi i \sum_{j=1}^k \log t_j R_j^{ss}}$ is $\RRanexp$-definable since the $R_j^{ss}$ are real.

In particular, we have the following concrete corollary, which we will use.
\begin{corollary}\label{prop:flat-definable}
	Let $Z$ be a complex manifold, and let $f \colon Z \to X$ be a holomorphic mapping
	that is moreover $\RRanexp$-definable. Let $\sigma \in \Gamma(Z, \fu \shE)$ be a
	holomorphic section with $\nabla \sigma = 0$. Then the resulting function $\sigma
	\colon Z \to E$ is $\RRanexp$-definable with respect to the algebraic definable structure.
\end{corollary}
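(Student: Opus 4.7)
The plan is to show that $\sigma$ is definable in the flat definable structure on $E$, where the conclusion becomes essentially tautological by flatness, and then invoke the preceding proposition to transfer the result to the algebraic structure. Under the standing norm-one-eigenvalues assumption, that proposition asserts that the two definable complex-manifold structures on $E$ coincide, so a map into $E$ is definable for one iff for the other.

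I would then unpack the flat atlas. It arises from a finite definable atlas $\{(U_i, \phi_i)\}$ on $X^{\mathrm{def}}$ with each $U_i$ simply connected, a choice of flat basis $s_1^{(i)}, \dotsc, s_r^{(i)}$ of $\shE^{\nabla}|_{U_i}$, and the resulting holomorphic trivialization $\psi_i \colon U_i \times \CC^r \to p^{-1}(U_i)$. The sets $f^{-1}(U_i)$ form a finite definable open cover of $Z$, and the pullbacks $\fu s_k^{(i)}$ are a flat frame of $\fu \shE$ on each one. Writing
\[
\sigma = \sum_{k=1}^r c_k \cdot \fu s_k^{(i)}
\]
on $f^{-1}(U_i)$, the hypothesis $\nabla \sigma = 0$ together with $\nabla(\fu s_k^{(i)}) = 0$ forces $d c_k = 0$, so each $c_k$ is locally constant on $f^{-1}(U_i)$.

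To globalize, I would use o-minimality of $\RRanexp$: the definable set $f^{-1}(U_i)$ has only finitely many connected components, each itself definable, and on any such component $V$ the functions $c_k$ take honest constant values. In the flat chart the restriction $\sigma|_V$ is then the map $v \mapsto (f(v), c_1, \dotsc, c_r)$, which is definable because $f$ is. Running over the finitely many components of the finitely many $f^{-1}(U_i)$ exhibits $\sigma \colon Z \to E$ as definable in the flat structure, and hence in the algebraic one. The only substantive point in the argument is precisely this passage from locally constant to constant, which is where o-minimality enters crucially; everything else is routine unwinding of the two atlases and the pullback connection.
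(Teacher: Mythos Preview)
Your argument is correct and is exactly the intended one: the paper states this result as an immediate corollary of the preceding proposition (equivalence of the flat and algebraic definable structures on $E$) and does not spell out a proof, so what you have written is the natural unwinding of that implication. The one nontrivial step you identify---passing from locally constant to constant on the finitely many definable connected components of each $f^{-1}(U_i)$---is precisely the point where o-minimality is used, and your treatment of it is sound.
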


\section{Proof of the main theorem}

\newpar
We now come to the proof of \cref{thm:main}. Let $X$ be a nonsingular complex
algebraic variety, $\shH$ a polarized integral variation of Hodge structure on $X$,
of even weight $2k$. Fix a base point $x_0 \in X$ and let $\HZ = \shH_{\ZZ, x_0}$;
this is a free $\ZZ$-module of finite rank, which comes with a symmetric bilinear pairing
\[
	Q = Q_{x_0} \colon \HZ \tensor_{\ZZ} \HZ \to \ZZ.
\]
As usual, we set $\HQ = \HZ \tensor_{\ZZ} \QQ$ and $\HR = \HZ \tensor_{\ZZ} \RR$; for
simplicity, we shall use the notation $C = C_{x_0} \in \End(\HR)$ for the Weil operator
of the Hodge structure at the point $x_0$. In particular,
\[
	(v,w) \mapsto \inner{v}{w}_C = Q(v, Cw)
\]
is a positive definite inner product on the vector space $\HR$.

\newpar
Let $D$ be the period domain parametrizing integral Hodge structures of weight $2k$
on $\HZ$ that are polarized by $Q$. Since the statement of \cref{thm:main} only
involves the Weil operator (instead of the full Hodge structure), it makes sense to
consider not the period domain $D$, but rather the associated symmetric space (which
is a quotient of $D$). Consider the $\QQ$-algebraic group
\[
	G = O(\HQ, Q),
\]
whose set of real points is the real Lie group
\[
	G(\RR) = \menge{g \in \GL(\HR)}{\text{$Q(gv,gw) = Q(v,w)$ for all $v,w \in \HR$}}.
\]
Recall that $G(\RR)$ acts transitively on the period domain $D$, and that $D =
G(\RR)/V$, where $V \subseteq G(\RR)$ is the stabilizer of the Hodge structure at
$x_0$. Clearly, the Weil operator satisfies $C \in G(\RR)$ and $C^2 = \id$. It is
easy to see that
\[
	K = \menge{g \in G(\RR)}{gC = Cg}
\]
is a maximal compact subgroup of $G(\RR)$ containing the compact subgroup $V$; by
\cref{lem:Weil}, the points of the quotient $G(\RR)/K$ can be identified with
Weil operators for the pair $(\HQ, Q)$, with the coset $g K$ corresponding to the
Weil operator $g C g^{-1}$. 

\newpar
Consider now the arithmetic subgroup
\begin{equation} \label{eq:Gamma}
	\Gamma = O(\HZ, Q) = G(\QQ) \cap \GL(\HZ).
\end{equation}
Note that $\Gamma$ is quite a bit larger than the monodromy group of the variation of
Hodge structure; this will be important in what follows. Instead of the usual period
mapping to $\Gamma \backslash D$, we consider the (Weil operator) period mapping
\begin{equation} \label{eq:Phi}
	\Phi \colon X \to \Gamma \backslash G(\RR)/K
\end{equation}
to the arithmetic quotient of the symmetric space $G(\RR)/K$. It associates to every
point $x \in X$ the Weil operator $C_x \in \Aut(E_x)$, viewed as an automorphism of
the fixed vector space $\HC$ by parallel transport; this is well-defined in the
quotient $\Gamma \backslash G(\RR)/K$ since $\Gamma$ contains the monodromy group of
the variation of Hodge structure. According to \cite[Theorem~1.3]{Bakker+Klingler+Tsimerman:TameTopology}, the mapping
$\Phi$ in \eqref{eq:Phi} is $\RRanexp$-definable. The main result
is stated for the usual period mapping into $\Gamma \backslash D$, but what is
actually proved in \cite[Theorem~4.1]{Bakker+Klingler+Tsimerman:TameTopology} is the $\RRanexp$-definability of \eqref{eq:Phi}.

\newpar
Let $\pi \colon \Xt \to X$ be the universal covering space
of $X$. Since the period mapping is locally liftable, there is a real-analytic mapping
$\Phit \colon \Xt \to G(\RR)/K$, unique up to a choice of base point, making the
following diagram commute:
\[
	\begin{tikzcd}
		\Xt \rar{\Phit} \dar{\pi} & G(\RR)/K \dar \\
		X \rar{\Phi} & \Gamma \backslash G(\RR)/K
	\end{tikzcd}
\]
We now extend the definability result to the vector bundle $p \colon E \to X$. On
$G(\RR)/K$, consider the trivial complex vector bundle $G(\RR)/K \times \HC$, where
$\HC = \HZ \tensor_{\ZZ} \CC$. The arithmetic group $\Gamma$ acts on this bundle via
the formula
\[
	\gamma \cdot (gK, v) = \bigl( \gamma gK, \gamma(v) \bigr),
\]
and the quotient gives us a ``universal'' complex vector bundle
\[
	\Gamma \backslash \bigl( G(\RR)/K \times \HC \bigr) 
	\to \Gamma \backslash G(\RR)/K
\]
with fiber $\HC$. The pullback $\piu \shE$ has a canonical trivialization by
$\nabla$-flat sections, hence $\piu E \cong \Xt \times \HC$. The trivial morphism of
vector bundles
\[
	\Phit \times \id \colon \Xt \times \HC \to G(\RR)/K \times \HC
\]
therefore descends to a morphism of complex vector bundles
\[
	\begin{tikzcd}
		E \rar{\Phi_E} \dar{p} & \Gamma \backslash \bigl( G(\RR)/K \times \HC \bigr) \dar \\
		X \rar{\Phi} & \Gamma \backslash G(\RR)/K.
	\end{tikzcd}
\]

\begin{proposition} \label{prop:E-definable}
	The morphism of complex vector bundles 
	\[
		\Phi_E \colon E \to \Gamma \backslash \bigl( G(\RR)/K \times \HC \bigr)
	\]
	is $\RRanexp$-definable.
\end{proposition}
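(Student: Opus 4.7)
My plan is to check definability locally over a suitable definable cover of $X$, combining the $\RRanexp$-definability of $\Phi$ from \cite{Bakker+Klingler+Tsimerman:TameTopology} with the equivalence of the algebraic and flat definable structures on $E$ established above. The first ingredient is a definable structure on the target $\Gamma \backslash (G(\RR)/K \times \HC)$. Classical reduction theory supplies an $\RRalg$-definable fundamental set $F \subseteq G(\RR)/K$ for $\Gamma$, for instance a finite union of translates of the image of a Siegel set. Then $F \times \HC$ is a definable fundamental set for the diagonal $\Gamma$-action on $G(\RR)/K \times \HC$, since the finite set of $\gamma$ with $\gamma F \cap F \neq \varnothing$ is the same for both actions; the Proposition from \cite{bbkt} quoted earlier then equips the quotient $\Gamma \backslash (G(\RR)/K \times \HC)$ with a unique definable structure for which the canonical map from $F \times \HC$ is definable.

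Second, I would invoke the definability of $\Phi$ itself. Its proof in \cite{Bakker+Klingler+Tsimerman:TameTopology} in fact produces a finite definable open cover $\{U_i\}$ of $X$ together with definable lifts $\tilde{\Phi}_i \colon U_i \to G(\RR)/K$ of $\Phi|_{U_i}$ whose images meet only finitely many $\Gamma$-translates of $F$. After refining via a definable triangulation, I may assume each $U_i$ is simply connected.

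Third, over each such $U_i$ I would pick a lift $\tilde{U}_i \subseteq \Xt$ with $\Phit|_{\tilde{U}_i} = \tilde{\Phi}_i$. Parallel transport of the standard basis of $\HC$ along paths in $\tilde{U}_i$ yields a basis of flat sections of $\shE|_{U_i}$, hence a flat holomorphic trivialization $\tau_i \colon U_i \times \HC \xrightarrow{\sim} p^{-1}(U_i)$; by the preceding proposition identifying the flat and algebraic definable structures on $E$, this $\tau_i$ is $\RRanexp$-definable. By the construction of $\Phi_E$ as the descent of $\Phit \times \id$, the composition $\Phi_E \circ \tau_i$ is simply
\[
  (x, v) \;\longmapsto\; \bigl[\tilde{\Phi}_i(x),\, v\bigr] \in \Gamma \backslash \bigl(G(\RR)/K \times \HC\bigr),
\]
which factors as the definable map $\tilde{\Phi}_i \times \id$, landing in a definable subset meeting only finitely many $\Gamma$-translates of $F \times \HC$, followed by the definable quotient map. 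Both factors are definable, so $\Phi_E$ is definable on each $p^{-1}(U_i)$, and hence on all of $E$.

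The main obstacle I anticipate is the bookkeeping in this last step: one must match the abstractly chosen lift $\tilde{\Phi}_i$ (witnessing the definability of $\Phi$) with the flat trivialization $\tau_i$ coming from a specific lift $\tilde{U}_i \subseteq \Xt$, so that the descended map $\Phi_E$ truly takes the clean form displayed above. The compatibility uses that $\Gamma$ contains the monodromy group of $\shH$, so that changing the lift $\tilde{U}_i$ acts simultaneously on $\tilde{\Phi}_i$ and on the flat trivialization by an element of $\Gamma$, and both changes cancel after passing to the quotient. Once this is in place, the proof is a formal composition of definable maps.
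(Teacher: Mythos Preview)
Your proposal is correct and follows essentially the same strategy as the paper: both localize, use the definable lifts of $\Phi$ produced in \cite{Bakker+Klingler+Tsimerman:TameTopology}, and then invoke the equivalence of the flat and algebraic definable structures on $E$ to see that the flat trivializations are definable, so that $\Phi_E$ is locally a composition of definable maps. The only cosmetic difference is that the paper works directly on the explicit simply connected fundamental domain $\Sigma^k \times \Delta^{n-k}$ covering a punctured polydisk neighborhood of the boundary (and applies \cref{prop:flat-definable} there), whereas you refine to simply connected definable opens inside $X$ itself and appeal to the flat/algebraic equivalence directly; both packagings use the same two ingredients.
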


\begin{proof}
	Let $Y$ be a complete nonsingular variety containing $X$, such that $X = Y
	\setminus D$ for a simple normal crossing divisor $D$.  By the same argument as in
	\cite[\textsection\,4.1]{Bakker+Klingler+Tsimerman:TameTopology}, the problem is local on $Y$, and so we may assume that $Y =
	\Delta^n$, with holomorphic coordinates $t_1, \dotsc, t_n$, and that the divisor
	$D$ is defined by the equation $t_1 \dotsm t_k = 0$. Let $\HH = \menge{z \in
	\CC}{\Im z > 0}$ and $\Sigma = \menge{z \in \HH}{0 \leq \Re z \leq 1}$. 
	By \cite[Theorem~1.5]{Bakker+Klingler+Tsimerman:TameTopology}, there is a subalgebraic Siegel set $\Sieg
	\subseteq G(\RR)$ for the maximal compact subgroup $K$, and a finite set $A
	\subseteq G(\QQ)$, such that the image of
	\[
		\Phit \colon \Sigma^k \times \Delta^{n-k} \to G(\RR)/K
	\]
	is contained in $A \cdot \Sieg$. We get the following commutative diagram:
	\[
		\begin{tikzcd}
			\Sigma^k \times \Delta^{n-k} \rar{\Phit} \dar[hook] \arrow[bend
			right=40,shift right=5ex,swap]{dd}{\pi}
				& A \cdot \Sieg \dar[hook] \\
			\HH^k \times \Delta^{n-k} \rar{\Phit} \dar{\pi} & G(\RR)/K \dar \\
			(\dst)^k \times \Delta^{n-k} \rar{\Phi} & \Gamma \backslash G(\RR)/K
		\end{tikzcd}.
	\]
	Now $\pi \colon \Sigma^k \times \Delta^{n-k} \to (\dst)^k
	\times \Delta^{n-k}$ is $\RRanexp$-definable, and so
	\cref{prop:flat-definable} implies that the isomorphism of complex
	vector bundles
	\[
		\piu E \cong \Sigma^k \times \Delta^{n-k} \times \HC
	\]
	is actually $\RRanexp$-definable. Since the morphism of trivial bundles
	\[
		\Phit \times \id \colon 
		\Sigma^k \times \Delta^{n-k} \times \HC \to (A \cdot \Sieg) \times \HC
	\]
	is obviously $\RRanexp$-definable, we get the desired result.
\end{proof}

\newpar
We are ready to prove a first definability result for self-dual vectors in a single
$\Gamma$-orbit. Suppose that we have an integral vector $a \in \HZ$ such
that $Ca = a$. We are interested in self-dual classes in the orbit $\Gamma a
\subseteq \HZ$. As noted after \cref{lem:Weil}, points of the symmetric space $G(\RR)/K$
correspond to Weil operators for $(\HQ, Q)$; we identity a coset $gK$ with the Weil
operator $C_g = g C g^{-1}$.

\begin{proposition} \label{prop:definable-orbit}
	Let $a \in \HZ$ be a nonzero integral vector with $Ca = a$. The set
	\[
		\menge{\Gamma (gK, v) \in \Gamma \backslash \bigl( G(\RR)/K \times \HC
		\bigr)}{\text{$v \in \Gamma a$ and $C_g v = v$}}
	\]
	is $\RRalg$-definable.
\end{proposition}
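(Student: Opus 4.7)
The key observation is that $S$ equals the image of the natural map
\[
\mu : G_a(\RR) \to \Gamma \backslash \bigl(G(\RR)/K \times \HC\bigr), \quad g \mapsto \bigl[(gK, a)\bigr].
\]
Indeed $\mu(g)$ visibly lies in $S$, and conversely given $\Gamma(gK, v) \in S$ we write $v = \gamma a$ for some $\gamma \in \Gamma$, note that $C_{\gamma^{-1}g}\, a = a$, and apply \cref{lem:orbit-Ga} to obtain $\gamma^{-1}g = g_a k$ with $g_a \in G_a(\RR)$ and $k \in K$, whence $\Gamma(gK, v) = \mu(g_a)$. The plan is to realize $\mu$, restricted to a suitable subalgebraic fundamental set of $G_a(\RR)$, as an $\RRalg$-definable map, and then conclude that its image $S$ is $\RRalg$-definable.

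The map $\mu$ factors through $\Gamma_a \backslash G_a(\RR)/K_a$, where $\Gamma_a = \Gamma \cap G_a(\QQ)$ and $K_a = K \cap G_a(\RR)$, so reduction theory (e.g., \cite[Theorem~15.5]{Borel:IntroductionArithmeticGroups}) provides a subalgebraic Siegel set $\Sigma_a \subseteq G_a(\RR)$ for $K_a$ together with a finite $C_a \subseteq G_a(\QQ)$ satisfying $G_a(\RR) = \Gamma_a \cdot C_a \cdot \Sigma_a$; thus $S = \mu(C_a \Sigma_a)$. Fix also a subalgebraic Siegel set $\Sigma \subseteq G(\RR)$ for $K$ and a finite $A \subseteq G(\QQ)$ with $G(\RR) = \Gamma \cdot A \cdot \Sigma$; the product $A\Sigma K/K \times \HC$ is then a subalgebraic definable fundamental set for the diagonal $\Gamma$-action on $G(\RR)/K \times \HC$, endowing the quotient with an $\RRalg$-definable structure via \cite[Proposition~2.3]{bbkt}. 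By \cref{prop:Siegel-Ga}, there are finitely many $\eta_i \in G(\QQ)$ with $C_a \Sigma_a \subseteq \bigcup_i \eta_i \Sigma$.

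The critical finiteness input is the Siegel property \cite[Theorem~15.4]{Borel:IntroductionArithmeticGroups}: for each $\eta_i$, the set $\{\gamma \in \Gamma : \gamma A \Sigma \cap \eta_i \Sigma \neq \varnothing\}$ is finite. Consequently, the set $F_i \subseteq \Gamma \times A$ of pairs $(\gamma, a')$ with $\gamma a' \Sigma \cap \eta_i \Sigma \neq \varnothing$ is finite, and we obtain the finite subalgebraic decomposition
\[
C_a \Sigma_a = \bigcup_{i,\,(\gamma,a') \in F_i} P_{i,\gamma,a'}, \qquad P_{i,\gamma,a'} := C_a\Sigma_a \cap \eta_i\Sigma \cap \gamma a' \Sigma.
\]
On each piece $P_{i,\gamma,a'}$, with $\gamma$ and $a'$ now constants, define the lift $g \mapsto (\gamma^{-1} g K, \gamma^{-1} a) \in A\Sigma K/K \times \HC$ into the fundamental set; this is a polynomial, hence $\RRalg$-definable, map. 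Composing with the definable quotient $A\Sigma K/K \times \HC \to \Gamma \backslash(G(\RR)/K \times \HC)$ recovers $\mu|_{P_{i,\gamma,a'}}$, since $\gamma \in \Gamma$ acts diagonally and $[(\gamma^{-1}gK, \gamma^{-1}a)] = [(gK, a)] = \mu(g)$. Assembling the finitely many pieces exhibits $\mu|_{C_a\Sigma_a}$ as $\RRalg$-definable, and therefore $S = \mu(C_a\Sigma_a)$ is an $\RRalg$-definable subset of the quotient.

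The main obstacle is that $\Sigma_a$ is a fundamental set for $\Gamma_a$ inside $G_a(\RR)$, not for $\Gamma$ inside $G(\RR)$, so to match the definable structure on $\Gamma \backslash (G(\RR)/K \times \HC)$ we must detour through $G(\QQ)$-translates of Siegel sets via \cref{prop:Siegel-Ga}. Control over this detour comes from the Siegel property, which cuts it down to a finite combinatorial decomposition; without this tameness, the lift of $\mu$ would in principle require infinitely many charts and likely fail to be definable.
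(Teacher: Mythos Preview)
Your proof is correct and follows the same overall strategy as the paper: both identify the set $S$ as the image of the map $G_a(\RR) \to \Gamma\backslash(G(\RR)/K\times\HC)$, $g\mapsto[(gK,a)]$, use \cref{lem:orbit-Ga} to verify this description, and invoke \cref{prop:Siegel-Ga} as the essential input relating Siegel sets in $G_a(\RR)$ to those in $G(\RR)$.

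The difference is one of packaging. The paper quotes \cite[Theorem~1.2]{Bakker+Klingler+Tsimerman:TameTopology} as a black box to conclude that the morphism of arithmetic quotients $\Gamma_a\backslash G_a(\RR)/K_a \to \Gamma\backslash G(\RR)/K$ (and its lift carrying the extra $\HC$-factor) is $\RRalg$-definable; you instead unpack that statement by hand, building the fundamental set $A\Sigma K/K\times\HC$, using the Siegel property to chop $C_a\Sigma_a$ into finitely many pieces on which an explicit $\Gamma$-lift lands in the fundamental set, and assembling. Your route is more self-contained and makes the role of the Siegel property \cite[Theorem~15.4]{Borel:IntroductionArithmeticGroups} visible, at the cost of some bookkeeping; the paper's route is shorter but relies on an external definability theorem whose proof is, in essence, exactly the argument you wrote down. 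One minor imprecision: calling the lift $g\mapsto(\gamma^{-1}gK,\gamma^{-1}a)$ ``polynomial'' is slightly loose, since it involves the quotient map $G(\RR)\to G(\RR)/K$; but that map is $\RRalg$-definable on Siegel sets, so the conclusion stands.
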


\begin{proof}
	We introduce the additional subgroups
	\[
		K_a = G_a(\RR) \cap K \quad \text{and} \quad
		\Gamma_a = G_a(\QQ) \cap \Gamma.
	\]
	For the same reason as before, $K_a$ is a maximal compact subgroup of $G_a(\RR)$,
	and $\Gamma_a$ is an arithmetic subgroup of $G_a(\QQ)$. \cref{lem:orbit-Ga}
	shows that the image of 
	\[
		G_a(\RR)/K_a \into G(\RR)/K
	\]
	consists of all cosets $gK$ whose corresponding Weil operator $C_g =
	gCg^{-1}$ satisfies $C_g a = a$. According to \cref{prop:Siegel-Ga} and
	\cite[Theorem~1.2]{Bakker+Klingler+Tsimerman:TameTopology}, the morphism of arithmetic quotients
	\[
		\Gamma_a \backslash G_a(\RR)/K_a \to \Gamma \backslash G(\RR)/K
	\]
	is $\RRalg$-definable. This morphism has a well-defined lifting
	\[
		i \colon \Gamma_a \backslash G_a(\RR)/K_a \to 
			\Gamma \backslash \bigl( G(\RR)/K \times \HC \bigr), \quad
			\Gamma_a h K_a \mapsto \Gamma (hK, a),
	\]
	which is $\RRalg$-definable for the same reason. In more detail, let $\Sieg$ be an
	arbitrary Siegel set in $G_a(\RR)$ with respect to the maximal compact subgroup
	$K_a$. The formula
	\[
		\tilde{i} \colon \Sieg \to G(\RR)/K \times \HC, \quad
		\tilde{i}(h) = (hK, a)
	\]
	gives us an $\RRalg$-definable local lifting of $i$. Because of
	\cref{prop:Siegel-Ga}, the composition of $\tilde{i}$ with the
	projection to $\Gamma \backslash(G(\RR)/K \times \HC)$ is definable;
	it follows that the mapping $i$ is itself $\RRalg$-definable.
	
	Now the image of $i$ is exactly the set we are interested in. Indeed, suppose that
	$\Gamma (gK, v) = \Gamma(hK, a)$ for some $g \in G(\RR)$, $h \in G_a(\RR)$, and $v \in
	\HC$. Then there is an element $\gamma \in \Gamma$ such that $g = \gamma h$ and $v =
	\gamma a$, and one easily deduces that $v \in \Gamma a$ and $C_g v = v$.
	In fact, the mapping $i$ is
	an embedding: if $\Gamma(hK, a) = \Gamma(h'K, a)$ for two elements $h,h' \in
	G_a(\RR)$, then there is some $\gamma \in \Gamma$ and some $k \in K$ such that $h'
	= \gamma h k$ and $\gamma a = a$; but then $\gamma \in \Gamma_a$, and therefore $k \in
	K_a$, and so the double cosets $\Gamma_a h' K_a = \Gamma_a h K_a$ are equal. The
	locus of self-dual classes in our given $\Gamma$-orbit is therefore an
	$\RRalg$-definable subset that is isomorphic to the smaller arithmetic quotient
	$\Gamma_a \backslash G_a(\RR)/K_a$. 
\end{proof}

\newpar
We now extend the above result to integral vectors $v \in \HZ$ with a fixed
self-intersection number $Q(v,v)$. 

\begin{proposition} \label{prop:definable-all}
	Let $q \in \NN$ be a positive integer. Then the set
	\[
		\menge{\Gamma (gK, v) \in \Gamma \backslash \bigl( G(\RR)/K \times \HC
		\bigr)}{\text{$v \in \HZ$, $Q(v,v) = q$, and $C_g v = v$}}
	\]
	is $\RRalg$-definable.
\end{proposition}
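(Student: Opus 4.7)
The plan is to reduce to Proposition~\ref{prop:definable-orbit} via the finiteness of $\Gamma$-orbits on the level set $\menge{v \in \HZ}{Q(v,v) = q}$. By Witt's theorem the group $G(\QQ)$ acts transitively on $\menge{v \in \HQ}{Q(v,v) = q}$; the stabilizer at any point is reductive (a lower-dimensional orthogonal group), so Borel--Harish-Chandra applied to the inclusion $G_{v_0}(\QQ) \subseteq G(\QQ)$ implies that the double coset space $\Gamma \backslash G(\QQ)/G_{v_0}(\QQ)$ is finite. Consequently $\menge{v \in \HZ}{Q(v,v) = q}$ decomposes as a finite disjoint union $\Gamma a_1 \sqcup \dotsb \sqcup \Gamma a_m$, and the set in the proposition breaks up correspondingly as $S = \bigcup_{i=1}^m S_i$ with
\[
	S_i = \menge{\Gamma(gK,v)}{v \in \Gamma a_i \text{ and } C_g v = v}.
\]
It therefore suffices to prove each $S_i$ is $\RRalg$-definable.

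When an orbit has a representative with $C a_i = a_i$, this is Proposition~\ref{prop:definable-orbit}. The hard part will be the general case, in which no integer representative is self-dual for the fixed Weil operator $C$; this happens for instance whenever the positive eigenspace $E_1(C) \subseteq \HR$ is not $\QQ$-rational. My plan is to change the reference Weil operator orbit-by-orbit: since $Q(a_i, a_i) = q > 0$, \cref{lem:Weil} supplies a Weil operator $C'_i$ with $C'_i a_i = a_i$, together with $g_i \in G(\RR)$ such that $C'_i = g_i C g_i^{-1}$. Setting $K'_i := g_i K g_i^{-1}$, I would then rerun the proof of \cref{prop:definable-orbit} verbatim with $(C,K)$ replaced by $(C'_i, K'_i)$. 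The argument is formal in the choice of maximal compact subgroup: the intersection $K'_i \cap G_{a_i}(\RR)$ is maximal compact in $G_{a_i}(\RR)$ thanks to $C'_i a_i = a_i$, \cref{prop:Siegel-Ga} goes through unchanged with $(C,K)$ replaced by $(C'_i, K'_i)$, and \cite[Theorem~1.2]{Bakker+Klingler+Tsimerman:TameTopology} is insensitive to this choice. This yields the $\RRalg$-definability of the analogous locus $S'_i \subseteq \Gamma \backslash (G(\RR)/K'_i \times \HC)$.

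Finally, I would transport $S'_i$ back to $S_i$ using the $G(\RR)$-equivariant bijection $G(\RR)/K'_i \to G(\RR)/K$ sending $hK'_i \mapsto h g_i K$. This map is the quotient of right translation $h \mapsto h g_i$ on $G(\RR)$, which is algebraic and hence $\RRalg$-definable, and it descends to an $\RRalg$-definable map on arithmetic quotients. A direct check identifies the two Weil operators on corresponding points: the one parametrized by $h K'_i$ is $h C'_i h^{-1}$, while the one parametrized by $h g_i K$ is $(h g_i) C (h g_i)^{-1} = h C'_i h^{-1}$. Hence the map carries $S'_i$ bijectively onto $S_i$, so each $S_i$ is $\RRalg$-definable, and so is the finite union $S$.

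The main obstacle is the orbit-by-orbit change of reference Weil operator in the second paragraph, along with the bookkeeping needed to confirm that the proof of \cref{prop:definable-orbit} is indeed formal in the choice of maximal compact subgroup $K$. Once that is in place, the combination with Witt's theorem and Borel--Harish-Chandra closes the argument.
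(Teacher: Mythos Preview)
Your overall strategy matches the paper's: decompose the integral level set $\{v\in\HZ:Q(v,v)=q\}$ into finitely many $\Gamma$-orbits, then handle each orbit via \cref{prop:definable-orbit}. Two comments, one on each half.

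\textbf{The finiteness step contains an error.} The double coset space $\Gamma\backslash G(\QQ)/G_{v_0}(\QQ)$ is \emph{not} finite in general. Already for the hyperbolic plane $\HZ=\ZZ^2$ with $Q\bigl((x_1,x_2),(y_1,y_2)\bigr)=x_1y_2+x_2y_1$ and $q=2$, Witt's theorem identifies the rational level set with $\QQ^\times$, while $\Gamma=O(\HZ,Q)$ is finite, so the $\Gamma$-orbits on $\QQ^\times$ are infinite. What \emph{is} true, and what you actually need, is finiteness of $\Gamma$-orbits on the \emph{integral} level set $\{v\in\HZ:Q(v,v)=q\}$. The paper quotes Kneser for this; alternatively, one can invoke the Borel--Harish-Chandra finiteness theorem for lattice points in a closed orbit (the $G$-orbit of $v_0$ in $\HQ$ is Zariski closed because the stabilizer is reductive, and then \cite[Theorem~6.9]{BHC} gives finitely many $\Gamma$-orbits on $G(\RR)\cdot v_0\cap\HZ$). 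Your route through $\Gamma\backslash G(\QQ)/G_{v_0}(\QQ)$ does not work.

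\textbf{The change of reference Weil operator.} Here you are being more careful than the paper, which simply says the result is ``a direct consequence'' of \cref{prop:definable-orbit} without addressing the hypothesis $Ca=a$. Your fix is correct: for each contributing orbit choose a Weil operator $C_i'$ fixing $a_i$, run \cref{prop:definable-orbit} relative to $(C_i',K_i')$, and transport back along the right-translation isomorphism $hK_i'\mapsto hg_iK$. This is legitimate because \cref{lem:orbit-Ga}, \cref{prop:Siegel-Ga}, and the definability input from \cite{Bakker+Klingler+Tsimerman:TameTopology} are all formal in the choice of maximal compact, and right translation by $g_i$ is semialgebraic on $G(\RR)$ and descends to an $\RRalg$-definable isomorphism of the arithmetic quotients. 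That said, you overstate the difficulty: once one identifies $G(\RR)/K$ intrinsically with the space of Weil operators for $(\HQ,Q)$ via \cref{lem:Weil}, the choice of base point $C$ is visibly irrelevant, and this is presumably why the paper does not dwell on it.
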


\begin{proof}
	The crucial point is that $\Gamma$ acts on the set $\menge{v \in
	\HZ}{Q(v,v) = q}$ with only \emph{finitely} many orbits; this makes the result
	a direct consequence of \cref{prop:definable-orbit}. The finiteness of
	the number of $\Gamma$-orbits follows from \cite[Satz~30.2]{Kneser}; since Kneser
	works in much greater
	generality, let us briefly explain how to deduce the statement we need.
	The quadratic form $v \mapsto Q(v,v)$ makes $\HZ$ into a lattice
	in the $\QQ$-vector space $\HQ$, and an integral vector $v \in \HZ$ with $Q(v,v) =
	q$ defines an isometry $v \colon [q] \to \HZ$, where $[q]$ means the
	lattice $\ZZ$ with the quadratic form $n \mapsto q n^2$. (Kneser calls this a
	``Darstellung'' of $[q]$ in $\HZ$.) Now $[\ell]$ is nondegenerate because $\ell
	\geq 1$, and \cite[Satz~30.2]{Kneser} guarantees that there are only finitely many
	equivalence
	classes of such isometries. But since $\Gamma = O(\HZ, Q)$, two vectors $v,v' \in
	\HZ$ are in the same equivalence class, in the sense of \cite[Definition~30.1]{Kneser},
	exactly when there is an element $\gamma \in \Gamma$ such that $v' = \gamma v$.
\end{proof}

\newpar
Finally, we assemble all the pieces and prove \cref{thm:main}.

\begin{proof}[Proof of Theorem~\ref*{thm:main}]
	The polarized integral variation of Hodge structure $\shH$ on $X$ gives rise to a
	kind of period mapping
	\[
		\Phi \colon X \to \Gamma \backslash G(\RR)/K
	\]
	that, up to the action by $\Gamma$, associates to every point $x \in X$ the Weil
	operator $C_x$ of the corresponding Hodge structure. We already know that $\Phi$
	is $\RRanexp$-definable \cite[Theorem~1.2]{Bakker+Klingler+Tsimerman:TameTopology}. We also have a morphism of complex
	vector bundles
	\[
		\Phi_E \colon E \to \Gamma \backslash \bigl( G(\RR)/K \times \HC \bigr)
	\]
	from the algebraic vector bundle $p \colon E \to X$ to the ``universal'' vector
	bundle on the right. We also know that $\Phi_E$ is $\RRanexp$-definable (by
	\cref{prop:E-definable}). The two morphisms fit into the following
	commutative diagram:
	\[
		\begin{tikzcd}
			E \rar{\Phi_E} \dar{p} & \Gamma \backslash \bigl( G(\RR)/K \times \HC \bigr) \dar \\
			X \rar{\Phi} & \Gamma \backslash G(\RR)/K.
		\end{tikzcd}
	\]
	Fix a positive integer $q \in \NN$.  By \cref{prop:definable-all},
	the set
	\[
		\menge{\Gamma (gK, v) \in \Gamma \backslash \bigl( G(\RR)/K \times \HC
		\bigr)}{\text{$v \in \HZ$, $Q(v,v) = q$, and $C_g v = v$}}
	\]
	is $\RRalg$-definable, and so its preimage under $\Phi_E$ is an
	$\RRanexp$-definable subset of $E$. Since it is easy to see that a point
	$(x,v) \in E$ lies in the preimage exactly when $v \in E_x$ is integral and
	satisfies $Q_x(v,v) = q$ and $C_x v = v$, we get the result.
\end{proof}

\section{Additional results}

\newpar
In this section, we prove the two variants of the main theorem stated in the
introduction. The idea is simple enough: we tensor a given integral variation of Hodge
structure by an auxiliary Hodge structure of weight $1$ or $2$, and then apply
\cref{thm:main}. The first Hodge structure that we need is the following.

\begin{example} \label{ex:HS1}
	Consider the Hodge structure on the first cohomology of the elliptic
	curve $\CC/(\ZZ \oplus \ZZ i)$. Concretely, this is an integral Hodge structure of
	weight $1$ on the free $\ZZ$-module $\ZZ^{\oplus 2}$, whose Hodge decomposition is
	given by
	\[
		\CC^{\oplus 2} = \CC(1,i) \oplus \CC(1,-i).
	\]
	The Hodge structure is polarized by the skew-symmetric bilinear form 
	\[
		\ZZ^{\oplus 2} \tensor \ZZ^{\oplus 2} \to \ZZ, \quad
			\bigl( (a_1,a_2), (b_1,b_2) \bigr) \mapsto a_1 b_2 - a_2 b_1, 
	\]
	and the Weil operator is easily seen to be the operator $(a_1, a_2) \mapsto
	(a_2,-a_1)$; note that it happens to preserve the integral structure in this case.
\end{example}

Now suppose that $H$ is a polarized integral Hodge structure of odd weight $2k-1$.
Let $Q \colon \HZ \tensor_{\ZZ} \HZ \to \ZZ$ be the skew-symmetric bilinear form
giving the polarization, and let $C \in \End(\HR)$ be the Weil operator (which now
satisfies $C^2 = -\id$). After taking the tensor product with the above Hodge
structure of weight $1$, we obtain a polarized integral Hodge structure $\tilde{H}$
of weight $2k$ on
\[
	\tilde{H}_{\ZZ} = \HZ \oplus \HZ,
\]
polarized by the symmetric bilinear form $\tilde{Q} \bigl( (a_1,a_2), (b_1,b_2) \bigr) =
Q(a_1,b_2) - Q(a_2,b_1)$, and with Weil operator $\tilde{C}(a_1,a_2) = (Ca_2,
-Ca_1)$. Evidently,
\[
	\tilde{H}_{2q}^+ = \menge{(a_1,a_2) \in \HZ \oplus \HZ}{%
	\text{$a_1 = Ca_2$ and $Q(a_1,a_2) = q$}};
\]
for polarized integral variations of Hodge structure of \emph{odd} weight,
\cref{cor:pairs} is therefore an immediate consequence of \cref{thm:main}.

\newpar
The remaining assertions concern variations of Hodge structure of even weight.
We can deal with them by the same method, using the following Hodge structure.

\begin{example} \label{ex:HS2}
	Consider now the symmetric square of the Hodge structure in \cref{ex:HS1}.
	Concretely, we get an integral Hodge structure of weight $2$ on the free
	$\ZZ$-module $\ZZ^{\oplus 3}$, whose Hodge decomposition is
	\[
		\CC^{\oplus 3} = \CC(1, 2i, -1) \oplus \CC(1,0,1) \oplus \CC(1, -2i, -1).
	\]
	The Hodge structure is polarized by the symmetric bilinear form 
	\[
		\ZZ^{\oplus 3} \tensor \ZZ^{\oplus 3} \to \ZZ, \quad
			\bigl( (a_1,a_2,a_3), (b_1,b_2,b_3) \bigr) \mapsto a_1 b_3 + a_3 b_1 - a_2
			b_2,
	\]
	and the Weil operator is easily seen to be the operator $(a_1,a_2,a_3) \mapsto
	(a_3,-a_2,a_1)$, which again preserves the integral structure.
\end{example}

Suppose that $H$ is a polarized integral Hodge structure of even weight $2k$,
with polarization $Q \colon \HZ \tensor_{\ZZ} \HZ \to \ZZ$ and Weil operator $C \in
\End(\HR)$. After taking the tensor product with the Hodge structure in
\cref{ex:HS2}, we obtain a polarized integral Hodge structure $\tilde{H}$ of
weight $2k+2$ on
\[
	\tilde{H}_{\ZZ} = \HZ \oplus \HZ \oplus \HZ,
\]
polarized by the symmetric bilinear form 
\[
	\tilde{Q} \bigl( (a_1,a_2,a_3), (b_1,b_2,b_3) \bigr) 
	= Q(a_1,b_3) + Q(a_3,b_1) - Q(a_2,b_2), 
\]
and with Weil operator $\tilde{C}(a_1,a_2,a_3) = (Ca_3, -Ca_2, Ca_1)$. This time around,
\[
	\tilde{H}_{q}^+ = \menge{(a_1,a_2,a_3) \in \HZ \oplus \HZ \oplus \HZ}{\text{$a_1 = Ca_3$, $Ca_2 = -a_2$, and}\
\text{$2 Q(a_1,a_3) - Q(a_2,a_2) = q$}}.
\]
We now obtain \cref{cor:pairs} for polarized integral variations of Hodge
structure of \emph{even} weight by looking at triples of the form $(a_1,0,a_3)$, and
\cref{cor:anti-self-dual} by looking at triples of the form $(0,a_2,0)$.

\section{Motivation from string theory}

\newpar
A motivation for studying the locus of self-dual integral Hodge classes stems from string theory. 
String theory is a candidate theory of quantum gravity that unifies Einstein's theory of general relativity 
and quantum field theory. Quantum consistency forces the string to travel through a higher-dimensional 
space-time manifold, extending beyond the four space-time dimensions that we currently observe in our universe.  
In prominent variants of string theory this implies that either six or eight extra dimensions need to be 
present. These extra dimensions are often considered to be on a tiny compact manifold. Particularly well-studied 
choices are Calabi-Yau manifolds, which are defined to be K\"ahler manifolds that admit a Ricci-flat metric. 
While it is not known which Calabi-Yau manifold one should pick, it has been studied intensively how the 
physical four-dimensional theory can be determined after making a choice. 

\newpar
We now describe one example from physics that originally suggested the result in
\cref{thm:main}. 
Let $Y$ be a compact polarized Calabi-Yau manifold of complex dimension $D$, with $D=3,4$ being the cases most 
relevant in the string theory application. One can associate a family of manifolds $Y_t$ to $Y$ that is obtained by deforming its complex structure. It is shown by the 
 Bogomolov-Tian-Todorov theorem \cite{GTian,Todorov} that the Kuranishi space of $Y$ is unobstructed. Hence $Y_t$ varies over a finite-dimensional moduli space $\mathcal{M}$ if one demands that all $Y_t$ 
are Calabi-Yau manifolds. For polarized Calabi-Yau manifolds of complex 
dimension $D$ the moduli space $\mathcal{M}$ is quasi-projective \cite{Viehweg} and of complex dimension $h^{D-1,1} = \dim H^{D-1,1}(Y)$. The 
existence of such a moduli space leads to several physical problems when using such $Y_t$ as backgrounds of 
string theory. In particular, one finds modifications of Newton's law or Einstein's equations that are in  
contradiction with observations. To avoid this immediate conclusion further ingredients known as background fluxes can be introduced.
These fluxes are integral classes in the cohomology of $Y$. Compared with the general considerations above, we 
thus identify $X= \mathcal{ \widehat  M}$, where $ \mathcal{ \widehat  M}$ is the resolution of $\mathcal{M}$ \cite{Hironaka}.
Furthermore, we set $H_\mathbb{Z}=H^{D}(Y,\mathbb{Z})$ and $Q = \int_{Y} v \wedge w$. Introducing a Weil operator $C$
 acting on $H^{p,q}(Y_t)$ with $i^{p-q}$, we define a norm $\|w \|^2 = Q(\bar w, C w)$. 
 
 \newpar 
The best understood string theory settings with integral fluxes are obtained from Type IIB string theory \cite{GVW,GKP}. Let 
us consider this ten-dimensional theory on a Calabi-Yau manifold $Y$ of complex dimension three. In this string theory setting one is also free to chose in addition to $Y$ two 
integral three-forms $F,H \in H^{3}(Y,\mathbb{Z})$, which set the flux background. 
They naturally combine to a complex three-form $ \mathcal{G}= F - \tau H$ with $\tau \in \mathbb{C}$. The fluxes $F,H$ 
are constrained by a consistency condition 
\begin{equation} \label{tadpole}
 Q(F,H) =  \ell \ , 
 \end{equation}
 where $\ell$ is a fixed positive rational number that can 
be derived for a given setting. This condition is known as a tadpole cancellation condition and plays a crucial 
role in finding consistent solutions of string theory. 
Furthermore, the presence of $\mathcal{G}$ impacts 
the physical four-dimensional theory by giving rise to an energy potential $V( \mathcal{G})$, which generally changes for different choices $Y_t$
within the family. Concretely, it takes the form \cite{GKP}
\begin{equation}
    V (\mathcal{G}) =\kappa \|\mathcal{G}^- \|^2 \ , 
\end{equation}
where $C \mathcal{G}^- = - i \mathcal{G}^-$ and $\kappa$ is $\tau$-dependent but constant over $\mathcal{M}$. The loci in $\mathcal{M}$ that minimize this energy potential with $\mathcal{G}^- = 0$ have been shown to be consistent background solutions of Type IIB string theory \cite{GKP}. It has been a long-standing question of whether or not the number of distinct $H,F$ with $\mathcal{G}^-=0$ and \eqref{tadpole} is finite.

\newpar
A more general setting that leads to a similar question arises in a geometric higher-dimensional version of Type IIB string theory known as F-theory \cite{Vafa:1996xn,Denef:2008wq}. 
In F-theory the extra dimensions are constrained to reside on an eight-dimensional compact manifold to extract a four-dimensional physical theory. 
The consistency equations for such twelve-dimensional 
string backgrounds admit solutions that are (conformal) Calabi-Yau manifolds $Y$ of complex dimension $D=4$ that admit a four-form flux background. 
Let us consider $v \in H^{4}(Y,\mathbb{Z})$ and assume that $v$ is primitive with respect to the K\"ahler form $J$ of $Y$. The condition \eqref{tadpole} generalizes to $Q(v,v)=\ell$, which is the consistency condition every solution 
to F-theory with a compact $Y$ has to satisfy. A non-trivial $v$ induces again an energy potential
\begin{equation}
 V(v) = \lambda \| v_- \|^2\ ,
\end{equation}
where $C v_- = -v_-$ and $\lambda$ is a constant. 
 $V(v)$ changes in the family $Y_t$ and hence is a function on the moduli space $\mathcal{M}$. The self-dual loci  in $\mathcal{M}$ are by definition those that satisfy 
 $C v = v$ and hence minimize  $V(v)$. They comprise consistent solutions to F-theory and are of central interest to some of the most prominent scenarios on realizing 
 our four-dimensional universe in string theory. Each choice of $v$ satisfying these consistency conditions can imply different values for physical observables. It is thus of profound  
 importance to know if there are infinitely many choices for $v$. 

\newpar
Finiteness statements about the set of self-dual $v \in H^{4}(Y,\mathbb{Z})$ with $Q(v,v)=\ell$ have been conjectured in \cite{Douglas:2003um,Acharya:2006zw}. In order to provide evidence for these statements and to estimate the number of distinct solutions it was suggested in \cite{Ashok:2003gk,Denef:2004ze} to  
introduce a critical point density on the moduli space. Mathematically rigorous proofs on estimating this 
density were given in \cite{Douglas:2004zu,Douglas:2004kc,Douglas:2005df}. Strong finiteness results have been shown in \cite{Douglas:2006zj,Lu:2009aw} for a certain index counting solutions to the self-duality relations for Calabi-Yau manifolds by applying a Gauss-Bonnet-Chern theorem on the moduli space. In this work we have given an affirmative answer to the finiteness conjectures
without using a density function. We have also shown that in the complex structure moduli space there is no need to introduce a refined notion of physically distinct vacua to ensure finiteness as suggested in \cite{Acharya:2006zw}. The finiteness statement is centrally based on the definability of the period mapping which suffices to exclude the pathological examples 
discussed in \cite{Acharya:2006zw}. In dimension $1$, it is possible to prove
\cref{thm:main} along the lines of \cite{Cattani+Deligne+Kaplan}, by using more details about the
$\SL(2)$-orbit theorem \cite{letter}, see also \cite{Grimm:2020cda} for a sketch of the argument. In higher dimensions, this kind of
argument looks completely infeasible.

%%%%%%%%%%%%%%%%%%%%%
% References
%%%%%%%%%%%%%%%%%%%%%

\end{document}